\newtheorem{thm}{Theorem}[section]
\newtheorem{lem}[thm]{Lemma}
\newtheorem{prp}[thm]{Proposition}
\newtheorem{mainthm}[thm]{Main Theorem}
\theoremstyle{definition}
\numberwithin{equation}{section}
\newcommand{\supp}{{\mathrm{supp}}}
\newcommand{\cB}{{\mathcal{B}}}
\newcommand{\cS}{{\mathcal{S}}}
\newcommand{\zed}{\mathbb{Z}}
\newcommand{\bR}{\mathbb{R}}
\newcommand{\cO}{{\mathcal{O}}}
\newcommand{\sym}{{\mathrm{sym}}}
\begin{document}

\baselineskip=17pt

\title[Modular form $L$-functions]{Zero-density estimate for modular form $L$-functions in weight aspect}

\author[B. Hough]{Bob Hough}
\address{Department of Mathematics\\ Stanford University\\
450 Serra Mall, Stanford, CA, USA}
\email{rdhough@stanford.edu}

\date{February 6, 2012}

\begin{abstract}
We prove a zero-density estimate for the $L$-functions associated to modular forms of large fixed weight on the full modular group.  The estimate has applications to the distribution of central values, and to non-vanishing.
\end{abstract}

\maketitle
\section{Introduction}
In the analytic theory of $L$-functions, it is sometimes possible to circumvent
assumption of the Riemann Hypothesis by applying zero density
arguments.  Briefly, one argues that in a family of $L$-functions that is
sufficiently ``spectrally complete'', the number of zeros of functions
in the family to the right of the half-line is comparatively
few. 
Historically, zero density questions were first considered with respect to the
Riemann zeta function $\zeta(\frac{1}{2} + \sigma + it)$ as the parameter $t$
varied, and the first result along these lines could be said to be the
Hadamard-de la Val\'{e}e-Poussin zero-free region.  Later investigations
focused on the number 
\[N(\sigma, T) = \#\left\{\rho = \frac{1}{2} + \beta + i \gamma: \zeta(\rho) =
0,
\sigma < \beta,
0<\gamma < T\right\},\] proving that this number decayed in the power of $T$
with
increasing
$\sigma>0$.  A classical result in this direction
is due to Ingham \cite{ingham}
\[N(\sigma, T) = O(T^{3(\frac{1}{2}-\sigma)/(\frac{3}{2}-\sigma)}\log^5 T).\]
Selberg \cite{selberg_zeta} made a
major contribution to this theory, proving the uniform bound
\[N(\sigma, T) \ll T^{1-\frac{\sigma}{4}} \log T,\] 
in $0\leq \sigma \leq \frac{1}{2}$. The crucial feature of this estimate is
that the power of $\log T$ matches the true order in the number of zeros of
$\zeta$ up to height $T$, so that the estimate is still useful even when
$\sigma$ is on the order of $\frac{1}{\log T}$.  This  formed one of the
key analytic ingredients in
Selberg's unconditional proof that the real and imaginary parts of $\log
\zeta(\frac{1}{2} + it)$ become normally distributed in large intervals $t \in
[T,2T]$.

Subsequent to his work on $\zeta$, Selberg \cite{selberg_dirichlet} proved an
analogous zero density estimate in the family of Dirichlet $L$-functions to a
large modulus $q$, with $q$ rather than $t$ thought of as the varying parameter.
Using this estimate, he showed that for fixed $t$ the argument of
$L(\frac{1}{2} + it, \chi)$ becomes normally distributed as $\chi$ varies
modulo $q$, for $q \to \infty$.  More recently Luo \cite{luo} has given an
analogue of Selberg's bound in $t$-aspect,
replacing $\zeta$ with the $L$-function of a fixed Hecke-eigen cusp form for
$SL_2(\zed)$:
\begin{align*} N_f(\sigma,T) :=& \#\left\{\rho = \frac{1}{2} + \beta + i\gamma: L(\rho;f) =
0,
\sigma< \beta, 0 < \gamma < T\right\}\\& \ll_f T^{1 - \frac{\sigma}{72}} \log T.\end{align*}
Together with earlier work of Bombieri and Hejhal \cite{bombieri_hejhal}, this
established the asymptotic normality of  $\log L(\frac{1}{2}
+ it;f)$ for $t \in [T, 2T]$, $f$ fixed with $T \to \infty$.  

The purpose of this article
is to prove a parallel extension of Selberg's Dirichlet $L$-function
estimate but now for the family of  $L$-functions associated to modular forms
of  large weight $k$. As in Selberg's work, an
important aspect of our estimate  is
that it is uniform in $k$ and for $T$ in the ranges
$\frac{1}{\log k} < T <
k^{\delta},$ for some small $\delta > 0.$
This plays a crucial role in the author's related paper \cite{hough_normal},
where it is established, unconditionally, that varying $f$ among
Hecke-eigenforms of weight $k$, $\log L(\frac{1}{2};f)$ is bounded above
by a quantity that is asymptotically normal as $k \to \infty$.  One further
piece of context: Kowalski and Michel \cite{kowalski_michel} have proven
another extension of Selberg's theorem to the family of weight 2 modular forms
of large prime level $q$, and Conrey and Soundarajan \cite{conrey_sound}
(real Dirichlet $L$-functions) and Ricotta \cite{ricotta} (Rankin-Selberg
$L$-functions) have given related estimates, each with applications to
non-vanishing.  Suitably modified, our estimate has similar
applications, but we do not pursue them here.

To state our density result more precisely, let
$S_k$ denote the space of weight $k$ holomorphic cusp forms for the modular
group
$\Gamma = SL(2,\zed)$ and let $H_k$ be the basis of forms in $S_k$ that are
simultaneous eigenfunctions of all the Hecke operators.  Write the Fourier
expansion of $f \in H_k$ as 
\[f(z) = \sum_{n = 1}^\infty n^{\frac{k-1}{2}}\lambda_f(n) e(nz).\]  We
normalize
$f \in H_k$ so that $\lambda_f(1) = 1$.\footnote{In particular, in our
normalization
Deligne's bound \cite{deligne} reads $|\lambda_f(n)| \leq d(n)$, the number of
divisors of $n$.}  The $L$-function associated to $f \in H_k$ is
\begin{equation}\label{euler_product}
 L(s;f) = \sum_{n = 1}^\infty \frac{\lambda_f(n)}{n^s} = \prod_{p} \left(1 -
\frac{\lambda_f(p)}{p^s} + \frac{1}{p^{2s}}\right)^{-1}, \qquad\qquad \Re(s) >
1.
\end{equation}
This is a degree two $L$-function with completed $L$-function
\[\Lambda(s;f) = (2\pi)^{-s}
\Gamma(s + \frac{k-1}{2}) L(s;f)\]  satisfying the self-dual
functional equation 
\[\Lambda(s;f) = i^k \Lambda(1-s;f).\]  In particular, with our normalization
the Riemann Hypothesis
asserts that all zeros $\rho$ of $\Lambda(s;f)$ satisfy $\Re(\rho) =
\frac{1}{2}$.

For $f \in H_k$ and $T$ growing, but small compared to $\sqrt{k}$, the number of
zeros $\rho$ of $L(s;f)$ with $0 < \Im(\rho)<T$ is $\sim \frac{T}{2\pi}\log
k$.  Thus the density of zeros of $L(s;f)$ near the central point $s =
\frac{1}{2}$ is $\frac{\log k}{2\pi}$.  Our main result says that among the family $\mathcal{F}_k$ of $L$-functions associated to forms in $H_k$, there are very few $L$-functions with zeros with small
imaginary part and real part to the right of $\frac{1}{2} + \frac{C}{\log k}$.
\begin{mainthm}\label{zero_density}
 Let $ \frac{2}{\log k}< \sigma < \frac{1}{2}$.  For some sufficiently small 
$\delta, \theta > 0$ we have uniformly in $\frac{10}{\log k}< T <
k^{\delta}$
\begin{align*}N(\sigma, T) &\stackrel{\text{def}}{=} \frac{1}{|H_k|}{\sum_{f \in H_k}}
\#\left\{L(\frac{1}{2} + \beta + i \gamma) = 0: \sigma < \beta, \; |\gamma|<
T\right\}\\&= O(Tk^{ - \theta \sigma } \log k).\end{align*}
\end{mainthm}

The main new
analytic ingredient of our theorem is the following asymptotic evaluation of the
harmonic twisted second moment of $L(s;f)$, which may be of independent
interest.
\begin{thm}\label{twisted_moment}
 Let $\sigma > 0$, $0 \neq |t| < k^{\frac{1}{4}}$ and $\ell < k^{\frac{1}{3}}$
be squarefree. Denote by $\tau_\nu(n) = \sum_{n_1n_2 = n} \left(\frac{n_1}{n_2}\right)^\nu$  the generalized divisor function. We have the
following formula for the harmonic twisted
second moment.
\begin{align*}{\sum_{f
\in H_k}}^h &\lambda_f(\ell) |L(\frac{1}{2} + \sigma + it; f)|^2 \\=& \zeta(1 +
2\sigma)\frac{\tau_{it}(\ell)}{\ell^{\frac{1}{2} + \sigma}}
+
\zeta(1-2\sigma) 
\left(\frac{k}{4\pi}\right)^{-4\sigma}\frac{\tau_{it}(\ell)}{\ell^{\frac{1}{2} -
\sigma}}
\\&+ i^k\zeta(1 +
2it)\left(\frac{k}{4\pi}\right)^{ - 2\sigma +
2it}\frac{\tau_\sigma(\ell)}{\ell^{\frac{1}{2} +
it}} +
i^k\zeta(1 - 2it) 
\left(\frac{k}{4\pi}\right)^{ - 2\sigma -
2it}\frac{\tau_{\sigma}(\ell)}{\ell^{\frac{1}{2} -
it}} 
\\& + O\left(\ell^{3/4}k^{-1/2-2\sigma +
\epsilon}\right).
\end{align*}
\end{thm}
\noindent The harmonic average (${\sum}^h$) means that
forms $f \in H_k$ are counted with the weight
$w_f = \frac{(4\pi)^{1-k}\Gamma(k-1)}{\langle f,
f\rangle},$ which appears in the Petersson trace formula.
Harmonic averages similar to this
one have an extensive history;
see for instance \cite{kuznetsov}, \cite{faiziev}, \cite{fomenko} and
references therein. Our proof is most noteworthy for the fact that the
evaluation of main
 terms goes ``beyond the diagonal'' and yet is not too difficult. After
applying the Petersson trace formula and Voronoi summation to the resulting
sums of Kloosterman sums, the off-diagonal main term arises as the Fourier
transform of the relevant function at zero, and the remaining integrals against
Bessel functions are error terms.  The analysis of these error terms involves
integrating against the Bessel function $J_{k-1}(x)$ near its transition region,
and this is bounded in a similar way to an analysis of the twisted first moment
of $L(\frac{1}{2}, \sym^2 f)$ in \cite{khan}.

\section{Outline of proof}
The method of proof of Theorem
\ref{zero_density} is the same as in Selberg's original work on Dirichlet
$L$-functions; in particular, we appeal to the following version of the
argument principle introduced there.  
\begin{lem}\label{selberg_argument}
 Let $\omega$ be an entire function, non-zero in the half plane $\Re(s) >
W$. Let $\cB$ be the rectangular box $|\Im(s)|
\leq H$, $W_0 \leq \Re(s) \leq W_1$ with $W_0 < W< W_1$.  Then
\begin{align*}
 4H& \sum_{\substack{\beta + i \gamma \in \cB \\ \omega(\beta + i\gamma) = 0}}
\cos\left(\frac{\pi \gamma}{2H}\right)\sinh\left(\frac{\pi(\beta -
W_0)}{2H}\right) \\ 
&= \int_{-H}^H \cos\left(\frac{\pi t}{2H}\right)\log|\omega(W_0 + it)|
dt\\& \qquad-\Re\int_{-H}^H
\cos\left(\pi \frac{W_1 - W_0 + it}{2iH}\right)\log \omega(W_1 + it)dt\\ 
&\qquad+\int_{W_0}^{W_1}\sinh\left(\frac{\pi(\alpha -
W_0)}{2H}\right)\log|\omega(\alpha + iH)\omega(\alpha -
iH)|d\alpha. 
\end{align*}

\end{lem}
\noindent The fundamental proposition that we prove is the following.
\begin{prp}\label{natural_mollification} There exist  Dirichlet polynomials
 $\{M(s;f)\}_{f \in H_k}$, which satisfy $M(\overline{s}) = \overline{M
(s)}$ and such that for sufficiently small positive $\delta$ and $\theta$,
uniformly in $|t| < k^\delta$, $\frac{1}{\log k}\leq \sigma \leq 1$
\[\frac{1}{|H_k|} \sum_{f \in H_k} \left|M(\frac{1}{2} +\sigma+ it; f)
L(\frac{1}{2} + \sigma +it; f)\right|^2 \leq 1 + O(k^{-\theta
\sigma})\]
and for all $t$,
\[M(\frac{3}{2} + it;f)L(\frac{3}{2} + it; f) = 1 + O(k^{-\theta}).\]
\end{prp}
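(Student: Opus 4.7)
I take $M(s;f)$ to be a truncated Dirichlet series approximating $1/L(s;f)$. Writing $1/L(s;f) = \sum_n \alpha_f(n) n^{-s}$, where $\alpha_f$ is multiplicative, supported on cubefree integers, with $\alpha_f(p) = -\lambda_f(p)$ and $\alpha_f(p^2) = 1$, I fix a small $\eta > 0$, set $L = k^\eta$, and define
\[
M(s;f) = \sum_{\ell \leq L} \frac{\alpha_f(\ell)}{\ell^s}\, P\!\left(\frac{\log(L/\ell)}{\log L}\right),
\]
where $P$ is a polynomial with $P(0) = 0$, $P(1) = 1$ whose higher-degree coefficients will be fixed at the end of the calculation. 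This $M$ is entire in $s$, and reality of the Hecke eigenvalues gives $M(\bar s;f) = \overline{M(s;f)}$. At $s = \tfrac{3}{2} + it$, the series $1/L(s;f)$ converges absolutely (by Deligne, $|\alpha_f(\ell)| \leq d(\ell)$), and a partial-summation tail bound yields $|M(\tfrac{3}{2}+it;f) - 1/L(\tfrac{3}{2}+it;f)| = O(L^{-1/2+\epsilon})$. Multiplying by the bounded factor $L(\tfrac{3}{2}+it;f)$ gives $ML - 1 = O(k^{-\eta/2+\epsilon})$, which is $O(k^{-\theta})$ for any $\theta < \eta/2$.

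\textbf{Second moment.} I first compute the harmonic average and then convert to the arithmetic one. Expanding $|M(s;f)|^2 \cdot |L(s;f)|^2$ into a double sum over $\ell_1,\ell_2 \leq L$, and using Hecke multiplicativity to rewrite $\alpha_f(\ell_1)\alpha_f(\ell_2) = \sum_\ell c(\ell_1,\ell_2,\ell)\lambda_f(\ell)$ for explicit combinatorial coefficients $c$ supported on $\ell \leq \ell_1 \ell_2$, I reduce the computation to a weighted sum of the quantities ${\sum_{f \in H_k}}^h \lambda_f(\ell)|L(\tfrac{1}{2}+\sigma+it;f)|^2$ for $\ell \leq L^2 = k^{2\eta}$. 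For $\eta < 1/6$ this lies within the range of Theorem \ref{twisted_moment}, which furnishes four main terms plus an error $O(\ell^{1+\sigma} k^{-1/2-2\sigma+\epsilon})$; the latter, summed against $\ell \leq k^{2\eta}$ with polynomial-coefficient bounds on the mollifier, remains negligible for $\eta$ small. The passage from the harmonic weight $w_f$ to $1/|H_k|$ is standard: insert a short Dirichlet polynomial approximating $L(1,\sym^2 f)^{-1}$, which lengthens the effective mollifier by $k^\epsilon$ but otherwise preserves the structure of the calculation.

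\textbf{Main obstacle.} The technical heart is combining the four main terms from Theorem \ref{twisted_moment} into the upper bound $1 + O(k^{-\theta\sigma})$ uniformly down to $\sigma \sim 1/\log k$. The diagonal term $\zeta(1+2\sigma)\tau_{it}(\ell)/\ell^{1/2+\sigma}$, after a Mellin--Perron evaluation of the sum over $\ell_1,\ell_2$, contributes a pole at $\sigma = 0$ that must cancel against the zero built into the mollifier by the M\"obius-like structure of $\alpha_f$; the coefficients of $P$ are chosen to achieve this cancellation and produce the leading $1$. The three dual main terms carry factors $k^{-4\sigma}$ and $k^{-2\sigma \pm 2it}$, which for $\sigma \sim 1/\log k$ are only of size $O(1)$ and do not individually decay. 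The requisite saving $k^{-\theta\sigma}$ therefore must arise from arithmetic cancellation in the dual sums against $\tau_\sigma(\ell)$: the symmetry of the M\"obius mollifier under the functional equation, in the Selberg self-dual style, produces this cancellation provided $P$ is chosen compatibly. Verifying the required uniformity in $\sigma \in [1/\log k, 1]$ and $|t| < k^\delta$, while keeping the accumulated errors under $k^{-\theta\sigma}$, is the main labor of the proof.
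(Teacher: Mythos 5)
Your overall strategy — an inverse-$L$ mollifier, Theorem \ref{twisted_moment} as the analytic input, and the Kowalski--Michel passage from harmonic to natural averages — matches the paper. But two of your specific claims would fail. For the line $\Re(s) = \frac{3}{2}$, your cutoff $P(\log(L/\ell)/\log L)$ is applied to \emph{every} $\ell \leq L$ and $P$ is a polynomial, hence cannot be identically $1$ on a neighborhood of its argument $1$. The weight at $\ell = 2,3,\dots$ therefore differs from $1$ by $\asymp \log\ell/\log L$, which gives $M(\frac{3}{2}+it;f) - L(\frac{3}{2}+it;f)^{-1} \ll 1/\log k$ (or $(\log k)^{-j}$ after forcing the first $j$ derivatives of $P$ to vanish at $1$) — never the claimed $O(L^{-1/2+\epsilon}) = O(k^{-\theta})$. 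The paper avoids this by taking $F\equiv 1$ on $[0,\sqrt M]$ and only activating the polynomial taper for $\sqrt M \le x \le M$ (with $F$ applied to the squarefree kernel $s(n)$, which also makes the ensuing Euler products factor cleanly); then the $\frac{3}{2}$-line discrepancy is a genuine tail, $O(M^{-1/4+\epsilon})$. Second, your account of where the saving comes from in the dual main terms misses the key idea for the $\zeta(1-2\sigma)(\frac{k}{4\pi})^{-4\sigma}$ piece. After using $\tau_{it}(m_1m_2)=\sum_{g}\mu(g)\tau_{it}(m_1/g)\tau_{it}(m_2/g)$ to separate $m_1$ and $m_2$, the residual sum is manifestly non-negative, and since $\zeta(1-2\sigma)<0$ for $0<\sigma<\frac12$ the entire piece is $\le 0$ and is simply discarded from the upper bound. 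This sign argument is load-bearing: $|\zeta(1-2\sigma)|\asymp 1/\sigma$ is large when $\sigma\asymp 1/\log k$, and it is far from clear that ``arithmetic cancellation against $\tau_\sigma$'' would defeat it. (For the $\zeta(1\pm 2it)k^{-2\sigma\pm 2it}$ piece your intuition — M\"obius cancellation in the mollifier producing $\zeta(1+2it)^{-1}$ — is roughly what happens, and then the $k^{-2\sigma}$ prefactor already supplies $O(k^{-\theta\sigma})$ since $\theta<2$.)

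The deharmonization is also heavier than your one-sentence description suggests. Kowalski--Michel approximate $L(1,\sym^2 f)$ \emph{itself} (not its inverse) by $\sum_{n\le x}\rho_f(n)/n$ with $x=k^\kappa$ of genuinely polynomial, not $k^\epsilon$, length; and the method needs, beyond the harmonic $L^2$ bound, a sup-norm hypothesis $\sup_f w_f|ML|^2 \ll k^{-\delta}$, which in turn rests on a subconvex bound for $L(\frac12+\sigma+it;f)$ in the weight aspect. Once the Dirichlet polynomial is inserted, the entire moment computation must be carried out again (this is Proposition \ref{truncated}), including re-establishing non-positivity of the $\zeta(1-2\sigma)$ term via a direct check that an explicit Euler factor $b(p)$ is non-negative — none of which is ``otherwise preserves the structure of the calculation.''
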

\noindent To deduce Theorem \ref{zero_density} from this Proposition, apply the
lemma to  $M\cdot L(s;f)$ with box bounded by
$\frac{1}{2} + \frac{1}{\log k} \pm 2iT$ and $\frac{3}{2} \pm 2iT$.  The
special feature of the lemma which permits uniformity even for small $T
\asymp \frac{1}{\log k}$, is that only the real part of the logarithm appears
in the part of the integral contained in the critical strip, so that this
part may be bounded using the second moment estimate of the Proposition. 
The further details of the deduction of Theorem \ref{zero_density} are
not difficult, and may be found both in Selberg's original argument, and in
the treatments in \cite{conrey_sound} and \cite{kowalski_michel}.  In the
remainder of the paper we are concerned with the proof of the Proposition,
which takes place in three stages: first we calculate the harmonic twisted
moment, proving Theorem \ref{twisted_moment}.  Next we mollify the second moment
with respect to the harmonic weights.  Finally we remove the harmonic weights
via the method of \cite{kowalski_michel}. 

\section{Some lemmas}\label{lemma_section}

\begin{lem}[Hecke Relations] \label{hecke}
 For each Hecke eigenform $f$, the Fourier coefficients of $f$ satisfy the
relation
\[\lambda_f(m)\lambda_f(n) = \sum_{d|(m,n)}
\lambda_f\left(\frac{mn}{d^2}\right).\]
\end{lem}
\noindent This is equivalent to the Euler product (\ref{euler_product}).

The basic orthogonality relation on $H_k$ is the Petersson Trace Formula.
\begin{lem}[Petersson Trace Formula] \label{petersson}
 We have
\[{\sum_{f \in H_k}}^h \lambda_f(m)\lambda_f(n) = \delta_{m = n} + 2\pi i^k
\sum_{c = 1}^\infty \frac{S(m,n;c)}{c} J_{k-1}\left(\frac{4\pi}{c}
\sqrt{mn}\right)\]
\end{lem}
\begin{proof}
 See e.g. \cite{iwaniec_kowalski} p 360.
\end{proof}

Recall that we denote by \begin{equation}\label{divisor_notation}\tau_{\nu}(n) =
\sum_{n_1n_2=n}\left(\frac{n_1}{n_2}\right)^\nu\end{equation} the generalized
divisor function.  We will use the following version of the Voronoi summation
formula.

\begin{lem}\label{voronoi} Let $g: \bR^+ \to \bR^+$ be smooth with compact
support. Let $c \geq 1$ and $(a,c) = 1$ with $ad \equiv 1 \bmod c$.    We have
\begin{align*}
 &\sum_{m=1}^\infty \tau_{it}(m) e\left(\frac{am}{c}\right) g(m) \\& \qquad= c^{2it
-1}\zeta(1 - 2it) \int_0^\infty g(x)x^{-it}dx+ c^{-2it -1}\zeta(1 + 2it)
\int_0^\infty g(x)x^{ it} dx  \\& \qquad\qquad + \frac{1}{c} \sum_{n
=1}^\infty \tau_{it}(n) e\left(\frac{-dn}{c}\right) \int_0^\infty g(x)
J_{2it}^+\left(\frac{4\pi}{c}\sqrt{nx}\right) dx \\ & \qquad\qquad +
\frac{1}{c}\sum_{n=1}^\infty \tau_{it}(n)e\left(\frac{dn}{c}\right)\int_0^\infty
g(x) K_{2it}^+\left(\frac{4\pi}{c}\sqrt{nx}\right) dx
\end{align*}
where
\begin{align*} J_{\nu}^+(x) &= \frac{-\pi}{\sin \frac{\pi \nu}{2}}(J_{\nu}(x) -
J_{-\nu}(x)), \qquad  K_{\nu}^+(x)  = 4 \cos \frac{\pi \nu}{2} K_\nu(x).
\end{align*}
\end{lem}

\begin{proof}
 This is a slight modification of \cite{iwaniec_kowalski} Theorem 4.10.
\end{proof}

In bounding oscillatory integrals we make use of the following simple estimate
(\cite{titchmarsh}, Lemma 4.5)
\begin{lem}\label{oscillatory_bound}
 Let $F(x), G(x)$ be real-valued functions on $[a,b]$ satisfying
$\frac{F'(x)}{G(x)}$ is monotonic and $F''(x) > r > 0$, $|G(x)| \leq M$.  Then 
\[\left|\int_a^b G(x) e^{i F(x)}dx \right| \leq \frac{8M}{\sqrt{r}}.\]
\end{lem}

\subsection{Facts concerning Bessel functions}
Bessel functions arise both in the Petersson Trace Formula and as transforms
in the Voronoi Summation Formula; we record here the properties that we will
need regarding these functions.  

The Bessel function of the first kind,
$J_\nu(x)$, has Taylor series about zero
given by 
\begin{equation*}
J_{\nu}(x) = \sum_{m = 0}^\infty \frac{(-1)^m
\left(\frac{z}{2}\right)^{\nu+
2m}}{m! \Gamma(\nu+1+m)}.
\end{equation*}
Differentiating, one obtains the relation 
\begin{equation}\label{differentiation_relation} J_\nu'(x) =
\frac{1}{2}\left(J_{\nu+1}(x) - J_{\nu-1}(x) \right).\end{equation}
Specializing to $\nu = k-1$, the Mellin Transform is given by
\begin{equation}\label{mellin_transform}\int_0^\infty J_{k-1}(x) x^{s-1}dx =
2^{s-1}\frac{\Gamma\left(\frac{k-1+s}{2}\right)}{\Gamma\left(\frac{k+1-s}{2}
\right )}.\end{equation}  

The behavior of all of the Bessel functions depends essentially on the
relationship between the size of the order $\nu$ and the variable $x$.  
When $x$ is large,  $x > |\nu|^2$ ($\nu$ possibly complex)  $J_\nu$
is oscillatory of essentially fixed frequency, while
the Bessel function of the third kind $K_\nu$ is exponentially small. 
Asymptotic
evaluations are given
by 
(\cite{bateman_higher_vol_2} p. 85)
\begin{align}\label{J_large_x} J_\nu(x) &= \sqrt{\frac{2}{\pi x}} \cos\left(x -
\frac{\pi \nu}{2} - \frac{\pi}{4}\right) \left[1 - \frac{P(\nu) }{128
x^2}\right] \\ \notag &\qquad - \sin\left(x -
\frac{\pi \nu}{2} - \frac{\pi}{4}\right) \frac{\nu^2 - \frac{1}{4}}{2x} +
O\left(\frac{1 +|\nu|^6}{x^3}\right)\\
\label{J_+_large_x}
J_\nu^+(x) &= -\sqrt{\frac{2\pi}{x}} \sin\left(x -
 \frac{\pi}{4}\right) \left[1 - \frac{P(\nu) }{128
x^2}\right] \\&\notag \qquad-\pi \cos\left(x - \frac{\pi}{4}\right) \frac{\nu^2 -
\frac{1}{4}}{2x} +
O\left(\frac{1 +|\nu|^6}{x^3}\right)
\end{align}
where $P(\nu) = 16\nu^4 -40\nu^2+9$.
Also,
\begin{equation} \label{K_large_x}  K_{\nu}(x) = \sqrt{\frac{\pi}{2x}}e^{-x}\left[1 +
O\left(\frac{1+|\nu|^2}{x}\right)\right].
\end{equation}
Since we regard $t$ as small compared to $k$, $|t| < k^{1/4}$, these are the
only evaluations we need regarding $J_{it}$, and $K_{it}$.

When $x$ is small, $x \ll k$, $J_{k-1}(x)$ is uniformly small.  Taking absolute
values in the Taylor expansion leads to the bound
(\cite{rudnick_sound_examples}, p 5)
\begin{equation}\label{small_z_bound} |J_{k-1}(x)| \leq
\frac{(\frac{x}{2})^{k-1}}{\Gamma(k-1)}e^{\frac{x}{2}}, \qquad\qquad
x < 2k.\end{equation}  In particular, if $x < \frac{k}{10}$ then $J_{k-1}(x) <
e^{-k}$.  

In the transition region $k \ll x \ll k^2$, $J_{k}(x)$ increases to a
global maximum of size
$k^{-1/3}$ at a point near $x = k$, and thereafter oscillates
with slowly increasing frequency and slowly decreasing amplitude.  Langer's
Formulas (\cite{bateman_higher_vol_2} p 85, (32) and (34)) give an asymptotic
evaluation: 
\begin{align}\label{langer_small_x}
 J_k(x) = &\frac{(\tanh^{-1} w - w)^{1/2}}{\pi w^{1/2}} K_{1/3}(z) +
O(k^{-4/3}); \\\notag  & \qquad \qquad x < k, \quad w = \left(1 -
\frac{x^2}{k^2}\right)^{1/2}, \quad z = k(\tanh^{-1}w - w)
\\ \label{langer_large_x}
J_k(x) = &\frac{(w - \tan^{-1}w)^{1/2}}{w^{1/2}}[J_{1/3}(z)\cos(\pi/6) -
Y_{1/3}(z)\sin(\pi/6)] + O(k^{-4/3}); \\ \notag  & \qquad\qquad x> k, \quad
w=\left(\frac{x^2}{k^2} - 1\right)^{1/2}, \quad  z = k(w - \tan^{-1}w).
\end{align}
Here $Y_{\nu}(x)$ is the Bessel function of the second kind, related to $J_\nu$
by
\[ Y_\nu(x) = \frac{J_\nu(x)\cos(\nu \pi) - J_{-\nu}(x)}{\sin \nu \pi}.\]
Since Langer's formulas depend on the functions $J_{1/3}$, $Y_{1/3}$ and
$K_{1/3}$ we record their further asymptotic properties.  For $x\gg 1$ the
evaluations of $J_{1/3}$ and $K_{1/3}$ are given by (\ref{J_large_x}) and
(\ref{K_large_x}), while the evaluation of $Y_{1/3}$ is the same as for
$J_{1/3}$ except that the places of $\cos$ and $\sin$ are interchanged.  When
$x <1$ we have the bounds
\begin{equation}\label{small_x_bounds} J_{1/3}(x) \ll
x^{1/3}, \qquad  Y_{1/3}(x) \ll x^{-1/3}, \qquad  K_{1/3}(x) \ll
x^{-1/3}.\end{equation}
We collect together these facts in the following lemma.
\begin{lem}\label{transition_region_facts}
 In the region $|x-k| < k^{1/3}$ we have the bound
\begin{equation}\label{central_region} J_k(x) \ll k^{-1/3}.
\end{equation}
For $0<x < k - k^{1/3}$ we have
\begin{equation}
 \label{just_below} J_k(x) = \frac{e^{k(w -
\tanh^{-1}w)}}{\sqrt{2\pi k w}}\left[1 + O(k^{-1}w^{-3}) \right]+O(k^{-4/3}),
\end{equation} with  $w = \left(1 - \frac{x^2}{k^2}\right)^{1/2}.$
For $x > k + k^{1/3}$ we have
\begin{align}\label{just_above}
 J_k(x)  &=  \sqrt{\frac{2}{\pi k
w}}\cos\left(k(w - \tan^{-1} w) - \frac{\pi }{4} \right) +O\left(k^{-4/3} +
\frac{1 + w^{-2}}{k^{3/2}w^{3/2}}\right)\end{align} with, now, $w =
\left( \frac{x^2}{k^2} - 1\right)^{1/2}.$
\end{lem}

\begin{proof}
 Note that for $x = k \pm k^\Delta$ and  $\Delta < 1$, $w \asymp
k^{(\Delta-1)/2}$. Thus for $|x-k| < k^{1/3}$ the bound follows from
Langer's formulas and the bounds in (\ref{small_x_bounds}).  For $|x-k| >
k^{1/3}$ we have $w \gg k^{-1/3}$ and, therefore, $z \gg 1$.  The remaining
formulas thus follow from the asymptotic evaluations of $J_{1/3}$, $Y_{1/3}$,
$K_{1/3}$ at large argument, together with Langer's formulas.
\end{proof}

One further consequence is the following simple lemma.
\begin{lem}\label{average_size_bound}
 For any integer $k > 0$ and any $A < k^2$,
\[\int_0^A |J_k(x)| dx \ll \sqrt{A}.\]
\end{lem}
\begin{proof}
In the range $A < k -
k^{1/2 }$ the formula (\ref{just_below}) and $w \gg k^{-1/4}$
imply uniformly
$J_k(x) \ll k^{-4/3} + e^{-\Omega(k^{1/4})}k^{O(1)}$.  For
$k-k^{1/2} < x < k + k^{1/2}$ bound simply
$J_k(x) = O(1)$.  In the range $k + k^{1/2}< A < 2k$ use
(\ref{just_above}) to bound 
\[\int_{k + k^{1/2}}^A |J_k(x)|dx \ll \frac{A}{k^{4/3}} + \int_{k +
k^{1/2}}^A \left(\frac{1}{\sqrt{kw}} + \frac{1}{k^{3/2}w^{7/2}}\right) dx.\]  For $k
< x
< 2k$, $w \gg \sqrt{\frac{x-k}{k}}$, so the last integral is
\[\ll \int_{k^{1/2}}^{A-k}\left( \frac{1}{(ky)^{1/4}} +
\frac{k^{1/4}}{y^{7/4}}\right) dy
\ll \frac{A^{3/4}}{k^{1/4}} + k^{-3/8} \ll \sqrt{A}.\]  Finally,
for $x > 2k,$ $w = \Omega(1)$ and so (\ref{just_above}) says that $|J_k(x)| \ll
\frac{1}{\sqrt{x}} + \frac{1}{k^{4/3}}$, which plainly suffices.
\end{proof}

With an eye toward applying Lemma \ref{oscillatory_bound} and with $x > k$ and $
w = \sqrt{
\frac{x^2}{k^2}-1}$ as above, we record
\begin{equation}\label{phase_change}\frac{\partial}{\partial
x}(kw - k \tan^{-1}w) = \frac{kw}{x}, \qquad 
\qquad \frac{\partial^2}{ \partial x^2} (kw -
k\tan^{-1} w) = \frac{k^2}{x^2(x^2-k^2)^{1/2}}.\end{equation}

\subsection{Approximate functional equation} 

 Fix, once and for all, a smooth function $H:\bR^+ \to
\bR^+$ satisfying 
\begin{enumerate}
 \item $H(x) \equiv 1$ for $x \in [0, \frac{1}{2}]$
 \item $H(x) + H(\frac{1}{x}) = 1$.
\end{enumerate}
In particular, the Mellin transform $\hat{H}(s)$ has a single simple pole at 0
of residue 1, is odd, and satisfies the bounds
\[\hat{H}(s) \ll_A \frac{1}{s(s+1)...(s+A-1)}, \quad A = 1, 2, ...\] and, for
$\Re(s)>1$, $|\hat{H}(s)| \ll 2^{\Re(s)}.$

We  record an approximate formula  for $|L(\frac{1}{2} +
\sigma + it;f)|^2$. 
\begin{prp}[Approximate functional equation]\label{approx}
 We have
\begin{align}\label{AFE}\left|L\left(\frac{1}{2} + \sigma + it;f\right)\right|^2
&= \sum_{d =
1}^\infty \frac{1}{d^{1 + 2\sigma}}\sum_{m=
1}^\infty
\frac{\lambda_f(m) \tau_{it}(m)}{m^{\frac{1}{2} + \sigma }}\\\notag &
\quad\quad\times \left(W_{k,
\sigma+it}(md^2)  + (4\pi^2md^2)^{2\sigma} \tilde{W}_{k,
-\sigma + it}(md^2)\right)\end{align}
with 
\[W_{k, \sigma+it}(\xi) = \frac{1}{2\pi i} \int_{(3)}\frac{\hat{H}(s)}{(4\pi^2 \xi)^s}
\frac{\Gamma(\sigma + \frac{k}{2} + it + s)\Gamma(\sigma + \frac{k}{2} -it +
s)}{\Gamma(\sigma + \frac{k}{2} + it) \Gamma(\sigma + \frac{k}{2} -it)}
ds\]
and
\[\tilde{W}_{k, -\sigma + it}(\xi) = \frac{1}{2\pi i} \int_{(3)}
\frac{\hat{H}(s)}{(4\pi^2 \xi)^s}
\frac{\Gamma(-\sigma + \frac{k}{2} + it + s)\Gamma(-\sigma + \frac{k}{2} -it +
s)}{\Gamma(\sigma + \frac{k}{2} + it) \Gamma(\sigma + \frac{k}{2} -it)}
ds.\]
\end{prp}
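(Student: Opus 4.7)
The plan is to write $|L(\tfrac12+\sigma+it;f)|^2 = L(\tfrac12+\sigma+it;f)\,L(\tfrac12+\sigma-it;f)$ (valid since $\lambda_f(n)\in\bR$, so $\overline{L(s;f)}=L(\bar s;f)$) and to represent the right-hand side through a contour integral of $\Lambda\cdot\Lambda$ against $\hat H$. Specifically, set
\[
I_+ \;:=\; \frac{1}{2\pi i}\int_{(3)}\Lambda(\tfrac12+\sigma+it+s;f)\,\Lambda(\tfrac12+\sigma-it+s;f)\,\hat H(s)\,ds.
\]
Since $f$ is cuspidal, both $\Lambda$'s are entire, so the integrand between the lines $(3)$ and $(-3)$ has only the simple pole of $\hat H$ at $s=0$. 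Shifting the contour to $(-3)$ yields
\[
I_+ \;=\; \Lambda(\tfrac12+\sigma+it;f)\,\Lambda(\tfrac12+\sigma-it;f) \;+\; I_-,
\]
with $I_-$ the integral on $(-3)$; rapid decay of $\hat H$ on vertical lines, combined with the exponential decay in $|\Im s|$ of $\Lambda$ coming from Stirling, annihilates the horizontal pieces. Dividing through by the normalization $(2\pi)^{-(1+2\sigma)}\Gamma(\tfrac{k}{2}+\sigma+it)\Gamma(\tfrac{k}{2}+\sigma-it)$ exposes $|L(\tfrac12+\sigma+it;f)|^2$ on the residue side.

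To produce the two sums in \eqref{AFE} I would expand absolutely on $\Re(s)=3$. Applying the Hecke relation (Lemma \ref{hecke}) to $L(s_1;f)L(s_2;f)$, writing $m=dm_1$, $n=dn_1$ for common divisors, and regrouping by $\ell=m_1n_1$, gives
\[
L(s_1;f)L(s_2;f) \;=\; \sum_{d,\ell\ge 1}\frac{\lambda_f(\ell)\,\tau_{(s_2-s_1)/2}(\ell)}{d^{s_1+s_2}\,\ell^{(s_1+s_2)/2}}.
\]
With $s_j=\tfrac12+\sigma\pm it+s$ (using $\tau_{-it}=\tau_{it}$), and observing that the $(2\pi)^{-(1+2\sigma+2s)}$ in $\Lambda\Lambda$ divided by the normalization contributes $(4\pi^2)^{-s}$, the quotient $I_+/\mathrm{norm}$ becomes exactly the first sum in \eqref{AFE} with kernel $W_{k,\sigma+it}$. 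For $I_-$ I substitute $s\mapsto -s$ to return to the line $(3)$; the oddness $\hat H(-s)=-\hat H(s)$ produces a single sign change, and applying the functional equation $\Lambda(\cdot;f)=i^k\Lambda(1-\cdot;f)$ to both factors contributes $i^{2k}=(-1)^k=1$ (the only cusp forms on $SL_2(\zed)$ exist for even $k$). Expanding the resulting integral the same way with $\sigma$ replaced by $-\sigma$, and rewriting $d^{-(1-2\sigma)}\ell^{-(1/2-\sigma)}=(d^2\ell)^{2\sigma}\,d^{-(1+2\sigma)}\ell^{-(1/2+\sigma)}$, generates the factor $(4\pi^2 md^2)^{2\sigma}$ together with the kernel $\tilde W_{k,-\sigma+it}$. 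The sign change from $\hat H$ is cancelled by the $-I_-$ in $|L|^2\cdot\mathrm{norm}=I_+-I_-$, yielding the $+$ between the two main terms.

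The whole argument is essentially bookkeeping, so the main obstacle is keeping the Gamma factors, powers of $2\pi$, and signs straight. Two subtleties deserve particular care. First, the Gamma normalization of $\tilde W_{k,-\sigma+it}$ retains $\Gamma(\tfrac{k}{2}+\sigma\pm it)$ in its denominator, not $\Gamma(\tfrac{k}{2}-\sigma\pm it)$: after the functional equation produces the new Gamma numerators $\Gamma(\tfrac{k}{2}-\sigma\pm it+s)$, one must still divide by the original fixed normalization, leaving the ratio as written. Second, an audit of the sign changes must confirm exactly one sign flip reaches the second term: the substitution $s\mapsto -s$ leaves the measure invariant after contour re-orientation, $\hat H(-s)=-\hat H(s)$ contributes $-1$, and $i^{2k}=1$ contributes nothing; that single $-1$ is precisely cancelled by the $-I_-$, producing the plus sign.
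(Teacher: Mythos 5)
Your argument is correct and is precisely the standard contour-shift derivation of the approximate functional equation that the paper cites (Iwaniec--Kowalski, pp.\ 97--100) rather than reproving; the bookkeeping of the $\hat H$ sign, the $i^{2k}=1$ factor, the Hecke-relation decomposition into $d$ and $\ell=m_1n_1$, and the rewrite $d^{-(1-2\sigma)}\ell^{-(1/2-\sigma)}=(4\pi^2 d^2\ell)^{2\sigma}(4\pi^2)^{-2\sigma}d^{-(1+2\sigma)}\ell^{-(1/2+\sigma)}$ all check out. So this is the same route, just written out in full.
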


\begin{proof}
See \cite{iwaniec_kowalski} pp 97-100.
\end{proof}

The functions   $W_{k,
\sigma + it}$
and $\tilde{W}_{k, -\sigma + it}$ satisfy the following properties.

\begin{lem} 
As functions of a real variable, both $W_{k, \sigma + it}$ and $\tilde{W}_{k,
-\sigma + it}$ are real valued.  For $t < k^{1/4}$ and $|\sigma| < 2$ we
have
\begin{align*} 
& W_{k, \sigma + it}(\xi) = 1 +
O\Bigl(\left(\frac{400 \xi }{k^2}\right)^{k^{1/4}}\Bigr), \qquad 
 W_{k, \sigma + it}(\xi) = O\Bigl(\left(\frac{
k^2}{80\xi}\right)^{k^{1/4}}\Bigr), \\&\qquad \qquad
\xi^j\left(\frac{\partial}{\partial
\xi}\right)^j W_{k, \sigma + it}(\xi)\ll_j
1
\\& \quad \text{and}\\
&\tilde{W}_{k, -\sigma + it}(\xi) =
\frac{\Gamma(-\sigma + \frac{k}{2} + it)\Gamma(-\sigma + \frac{k}{2}
-it)}{\Gamma(\sigma + \frac{k}{2} + it)\Gamma(\sigma + \frac{k}{2} - it)} +
O\Bigl(\left(\frac{400 \xi }{k^2}\right)^{k^{1/4}}\Bigr), \quad
  \\&\qquad  \tilde{W}_{k, -\sigma + it}(\xi) = O\Bigl(\left(\frac{
k^2}{80\xi}\right)^{k^{1/4}}\Bigr), \qquad 
\xi^j\left(\frac{\partial}{\partial \xi}\right)^j\tilde{W}_{k, \sigma +
it}(\xi)\ll_j k^{ -
4\sigma}.
\end{align*}
\end{lem}

\begin{proof}
Pair $s$ and $\overline{s}$ in the defining integrals to prove that $W$ and
$\tilde{W}$ are real. 

For the bounds on the functions,  shift the contour to $\Re(s) = \pm
k^{1/4}$ and estimate the ratio
of Gamma factors using Stirling's approximation.  In particular, for $|\Delta|<
\Re(z)^{1/2}$ we use the estimate
\begin{equation}\label{gamma_ratio}\frac{\Gamma(z + \Delta)}{\Gamma(z)} =
\exp\left(\Delta \log z + \frac{
\Delta^2}{2 z} + O\left(|\Delta||z|^{-1} \right) \right).\end{equation}

The derivatives are bounded
by estimating directly on the $\Re(s) = 0$ line.  

\end{proof}

\section{Twisted second moment, Proof of Theorem
\ref{twisted_moment}}\label{twisted_section}

From the approximate functional equation, 
\begin{align*}
 &{\sum_{f
\in H_k}}^h \lambda_f(\ell) \left|L\left(\frac{1}{2} + \sigma + it;
f\right)\right|^2\\ &\qquad= \sum_{m,d
= 1}^\infty \frac{\tau_{it}(m)}{m^{\frac{1}{2} + \sigma}d^{1 +
2\sigma}}\left[W_{k, \sigma +it}(md^2) + (4\pi^2
md^2)^{2\sigma}\tilde{W}_{k,-\sigma + it}(md^2)\right]\\ &\qquad\qquad\qquad\times {\sum_{f \in
H_k}}^h \lambda_f(\ell)\lambda_f(m)
\end{align*}

Applying the Petersson inner product
we obtain a diagonal term
\begin{equation*}\tag{D}\label{diagonal}
 \frac{\tau_{it}(\ell)}{\ell^{1/2 + \sigma}}\sum_{d = 1}^\infty
\frac{1}{d^{1 + 2\sigma}} \left[W_{k, \sigma + it}(\ell d^2) + (4\pi^2 \ell
d^2)^{2\sigma}\tilde{W}_{k, -\sigma + it}(\ell d^2)\right]
\end{equation*}
and an off-diagonal term 
\begin{align*}\tag{OD}\label{off_diagonal}&2\pi i^k \sum_{m,d = 1}^\infty 
\frac{\tau_{it}(m)}{m^{1/2 + \sigma}d^{1 + 2\sigma}} \left[W_{k, \sigma
+ it}(md^2) +
(4\pi^2 md^2 )^{2\sigma}\tilde{W}_{k, -\sigma + it}(md^2)\right]\\ &
\qquad\qquad\times
\sum_{c=1}^\infty
\frac{S(m,\ell;c)}{c}J_{k-1}\left(\frac{4\pi}{c}\sqrt{\ell
m}\right).\end{align*}
Introducing the integrals defining $W$ and $\tilde{W}$ the diagonal terms are
given by
\begin{align*}(\ref{diagonal}) = \frac{\tau_{it}(\ell)}{\ell^{1/2 + \sigma}} \Biggl\{
\frac{1}{2\pi i}
\int_{(3)} &\frac{\zeta(1 + 2s)}{(4\pi^2 \ell)^{s-\sigma}} \frac{\Gamma(s +
\frac{k}{2} + it)\Gamma(s + \frac{k}{2} - it)}{\Gamma(\sigma + \frac{k}{2} +
it)\Gamma(\sigma +
\frac{k}{2} - it)}\\&\qquad\qquad\times \left(\hat{H}(s-\sigma) + \hat{H}(s + \sigma)\right)
ds\Biggr\},\end{align*}
and this evaluates to the first two main terms, with an error that is
$O(\frac{1}{k})$, by shifting the contour to the line $\Re(s) =
-\frac{1}{2}+\sigma$.\footnote{The pole of $\zeta$ does not contribute since
$\hat{H}(-\sigma) + \hat{H}(\sigma) = 0$.}

The two remaining main terms come from off the diagonal, so we now work to
isolate these terms.  The crucial fact in evaluating the off-diagonal terms is
that the summations over $c$ and $d$ are very short.  Exchanging order of
summation, we write 
\begin{align*}&(\ref{off_diagonal})= 2\pi i^k\sum_{cd <
10000\sqrt{\ell}}\frac{\Sigma_{c,d}}{cd^{1 + 2\sigma}} +O\left( \sum_{cd \geq
10000\sqrt{\ell}} \frac{|\Sigma_{c,d}|}{cd^{1 + 2\sigma}}\right);\\
&\Sigma_{c,d} = \sum_{m = 1}^\infty
\frac{\tau_{it}(m)S(m,\ell;c)}{m^{1/2 + \sigma}}
J_{k-1}\left(\frac{4\pi}{c}\sqrt{\ell m}\right)\\& \qquad\qquad \times \left[W_{k, \sigma
+ it}(md^2) +
(4\pi^2 md^2 )^{2\sigma}\tilde{W}_{k, -\sigma +
it}(md^2)\right].
\end{align*}
We show that the sum over large $cd$ is an error that is $o(1)$. 
\begin{lem}\label{large_cd_bound}
 When $cd \geq 10000 \sqrt{\ell}$ we have the bound
\begin{align*}
\Sigma_{c,d} \ll \frac{(cdk\ell)^{O(1)}}{\Gamma(k-1)} e^{4\pi k
\sqrt{\frac{\ell^{1/2}}{cd}}} \left(2\pi k
\sqrt{\frac{\ell^{1/2}}{cd}}\right)^{k-1} + 
cd^{4\sigma}
\left(\frac{\ell^{1/2}}{80 cd}\right)^{k^{1/4}}.
\end{align*}
\end{lem}
\noindent This sufficies, since when summed over $cd >
10000\sqrt{\ell}$, the bound of the lemma yields 
\[\ll \ell^{O(1)}e^{-k^{1/4}} = o(1). \] 
\begin{proof}[Proof of lemma]
Split the sum over $m$ according as $m \leq 
\frac{k^2c}{\ell^{1/2} d}$, or not.  For small $m$, each term in the sum
is bounded by applying the
bound (\ref{small_z_bound}) for the Bessel function, bounding the Kloosterman
sum trivially by $c$ and bounding $W$ and
$\tilde{W}$ by $O(1)$.  This yields 
\begin{align*}&\ll cd^{4\sigma}\frac{1}{\Gamma(k-1)}\sum_{m < k^2
c/d\ell^{1/2}}e^{\frac{4\pi}{c}\sqrt{\ell m}}\left(\frac{2\pi}{c}
\sqrt{\ell m}\right)^{k-1} \\&\leq \frac{(cdk\ell)^{O(1)}}{\Gamma(k-1)} e^{4\pi k
\sqrt{\frac{\ell^{1/2}}{cd}}} \left(2\pi k
\sqrt{\frac{\ell^{1/2}}{cd}}\right)^{k-1},\end{align*} by bounding each term in
the sum by the largest term.  In the part of the sum with $m > \frac{k^2
c}{\ell^{1/2}d}$ we bound the Bessel function by $O(1)$, the Kloosterman
sum by $c$ and $W,
\tilde{W}$ by $\ll \left(\frac{ k^2}{80 d^2m}\right)^{k^{1/4}}$,
which gives
\[ \ll cd^{4\sigma} \sum_{m > k^2c/\ell^{1/2}d} \frac{1}{m^{1/2
+ \sigma}} \left(\frac{k^2}{80 md^2}\right)^{k^{1/4}} \ll cd^{4\sigma}
\left(\frac{\ell^{1/2}}{80 cd}\right)^{k^{1/4}}.\]  
\end{proof}

In order to apply the Voronoi summation formula to the sum over $m$ we open the
Kloosterman sum and introduce a function of compact support.  Let $F
\in C_c^\infty(\bR^+)$ satisfy
\begin{enumerate}
 \item $F(x) \equiv 1$ for $\frac{k}{1000} < x < 1000 k\sqrt{\ell}$
\item $\supp(F) \subset [\frac{k}{2000}, 2000 k\sqrt{\ell}]$
\item For each $j=0, 1, 2, ...$ and all $x$, $x^j \frac{d^j}{dx^j} F(x) \ll_j
1.$
\end{enumerate}
and consider the perturbed sum
\begin{align*}\tilde{\Sigma}_{c,d}= 
{\sum_{a \bmod c}}^* e\left(\frac{\overline{a}\ell}{c}\right)
\sum_{m = 1}^\infty &\frac{\tau_{it}(m)e(\frac{am}{c})}{m^{1/2 + \sigma}}
J_{k-1}\left(\frac{4\pi}{c}\sqrt{\ell
m}\right)F\left(\frac{4\pi}{c}\sqrt{\ell m}\right)\\ 
&\times\left[W_{k, \sigma
+ it}(md^2) +
(4\pi^2 md^2 )^{2\sigma}\tilde{W}_{k, -\sigma +
it}(md^2)\right] .
\end{align*}
This negligibly changes the sum, since for those $c,m$ for which
$F\left(\frac{4\pi}{c}\sqrt{\ell m}\right)$ is not identically 1, either $J_k$
 or $W$ or $\tilde{W}$ is extremely small: there are $O(\ell^{O(1)}k^{O(1)})$
terms with $\frac{4\pi}{c}\sqrt{\ell m} < \frac{k}{1000}$ and for these terms, the Bessel function is bounded by
 $e^{-k}$.  Meanwhile, if
$\frac{4\pi}{c}\sqrt{\ell m} > 1000 k\ell$ then $m >
\left(\frac{1000}{4\pi}\right)^2 k^2c^2$ so that the sum is bounded by 
\[\ll \ell^{O(1)}k^{O(1)} \sum_{cd < 1000\sqrt{\ell}} \sum_{m >
\left(\frac{1000}{4\pi}\right)^2 k^2c^2}
\left(\frac{k^2}{80md^2}\right)^{k^{1/4}} \ll \ell^{O(1)}k^{O(1)}
e^{-k^{1/4}}.\]

Introduce functions
\begin{align*} g_{c,d}(x) &= \frac{1}{x^{1/2 + \sigma}} W_{k, \sigma +
it}(d^2
x)J_{k-1}\left(\frac{4\pi}{c} \sqrt{\ell x}\right) F\left(\frac{4\pi}{c}
\sqrt{\ell x}\right),\\ \tilde{g}_{c,d}(x) &= \frac{1}{x^{1/2
- \sigma}} \tilde{W}_{k, -\sigma + it}(d^2
x)J_{k-1}\left(\frac{4\pi}{c} \sqrt{\ell x}\right) F\left(\frac{4\pi}{c}
\sqrt{\ell x}\right)\end{align*}
so that
\[\tilde{\Sigma}_{c,d} = {\sum_{a\bmod c}}^*e\left(\frac{\overline{a}\ell}{c}
\right)\sum_m \tau_{it}(m) e\left(\frac{am}{c}\right)\left\{g_{c,d}(m) + (4\pi^2
d^2)^{2\sigma}\tilde{g}_{c,d}(m)\right\}.\]  Applying, for each $c,d$, Voronoi
summation in the sum over $m$, we express the off-diagonal terms
as\footnote{Note that summation over ${a \bmod c}^*$ has been replaced by
Ramanujan sums.}
\begin{align*}
\frac{(\ref{off_diagonal})}{2\pi i^k}+o(1) =    &\zeta(1 - 2it)
\sum_{cd <
10000\ell^{1/2}} \frac{S(0,
\ell;
c)}{c^{2 - 2it} d^{1 + 2\sigma}} \int_0^\infty
g_{c,d}(x)x^{-it}dx 
\\&  +  \zeta(1 + 2it)\sum_{cd < 10000 \ell^{1/2}} \frac{S(0, \ell;c)}{c^{2
+ 2it} d^{1 + 2\sigma}} 
\int_0^\infty g_{c,d}(x)x^{ it} dx  
\\\tag{J}\label{J_sum} + 
\sum_{cd <
10000 \ell^{1/2}}&\sum_{n
=1}^\infty \frac{\tau_{it}(n) S(0, \ell-n;c)}{c^2d^{1 + 2\sigma}}   
\int_0^\infty g_{c,d}(x)
J_{2it}^+\left(\frac{4\pi}{c}\sqrt{nx}\right) dx 
\\\tag{K} \label{K_sum} +
\sum_{cd < 10000 \ell^{1/2}}&\sum_{n=1}^\infty \frac{\tau_{it}(n)S(0, \ell +n;
c)}{c^2 d^{1 +
2\sigma}}
\int_0^\infty
g_{c,d}(x) K_{2it}^+\left(\frac{4\pi}{c}\sqrt{nx}\right) dx
\\& \quad+ \text{ analogous terms coming from $\tilde{g}$}.
\end{align*}
 We
are going to show that the first two terms 
combine with the corresponding 
terms from $\tilde{g}$ to yield the remaining two main terms of the theorem,
and that (\ref{J_sum}) and (\ref{K_sum}) are error terms.

\subsection{The off-diagonal main terms}
Expanding the definition of $g_{c,d}(x)$, the first two terms above are equal to
\begin{align} \label{off_diagonal_main} 2i^k \Re\Biggl\{& 2 \pi \zeta(1 - 2it)
\sum_{cd <
10000\ell^{1/2}}
\frac{S(0, \ell;
c)}{c^{2 - 2it} d^{1 + 2\sigma}} \\ \notag &\times \int_0^\infty
 W_{k, \sigma + it}(d^2
x)J_{k-1}\left(\frac{4\pi}{c} \sqrt{\ell x}\right) F\left(\frac{4\pi}{c}
\sqrt{\ell x}\right)x^{-1/2 - \sigma -it}dx.\Biggr\}
\end{align}  With negligible
error the function $F$ may be removed from the integrand, and then the sums
extended to all $c$ and $d$, this justified by  the continuous analog of
the arguments given above involving summations over
$m$.\footnote{We bound only the real part of the error.  Recall that  $W$ and
$\tilde{W}$ are real, so that the imaginary parts of $c^{it}$ and $x^{it}$ are
 $O(t\log \ell)$ and $O(t\log x)$.} Inserting the definition
of $W_{k, \sigma+it}$ we obtain for the integral in (\ref{off_diagonal_main})
with $F$ removed
\begin{align*}&\int_0^\infty
\Biggl\{\left[\frac{1}{2\pi i}
\int_{(3)} \frac{\hat{H}(s)}{(4\pi^2 xd^2)^s} \frac{\Gamma(s + \sigma +
\frac{k}{2}
+it)\Gamma(s + \sigma + \frac{k}{2} - it)}{\Gamma(\sigma + \frac{k}{2} +
it)\Gamma(\sigma + \frac{k}{2}-it)} ds \right]
\\&\qquad\qquad\qquad\qquad\qquad\qquad\qquad\qquad\qquad\times
J_{k-1}\left(\frac{4\pi}{c}\sqrt{\ell x}\right)x^{1/2 -
\sigma-it}\Biggr\}\frac{
dx}{x}\end{align*} In view of the bound (\ref{small_z_bound}), both integrals
are absolutely convergent.  Put $w = \frac{4\pi}{c}\sqrt{\ell x}$ and
exchange the order of the integration to rewrite this as 
\begin{align*} 
 & 2 \left(\frac{c}{4\pi \sqrt{\ell}}\right)^{1-2\sigma -2it} 
\frac{1}{2\pi i}\int_{(3)}\left(\frac{4\ell}{c^2d^2}\right)^s
\frac{\Gamma(s + \sigma + \frac{k}{2} + it)\Gamma(s + \sigma +
\frac{k}{2}-it)}{\Gamma(\sigma + \frac{k}{2} + it)\Gamma(\sigma + \frac{k}{2} -
it)}\\
&\qquad\qquad\qquad\qquad\qquad\qquad\qquad\times
\left[\int_0^\infty J_{k-1}(w)w^{1 - \sigma -2it -2s}
\frac{dw}{w}\right]\hat{H}(s) ds
\end{align*}
The bracketed integral is the Mellin transform of
$J_{k-1}$, given by (\ref{mellin_transform}).  

We now pass the summations over $c$ and $d$ under the integral.  Recall
that the Ramanujan sum evaluates to 
\begin{align*}
  &S(0, a;p) = -1,\\&S(0, a; p^e) = 0, \\ &S(0,p;p) = p-1, \qquad
\qquad\qquad (a,p) = 1,\; e
\geq 1
\\ &S(0,ap, p^2) = -p, \\&S(0, ap, p^{e+1}) = 0 
\end{align*}
Thus the resulting Dirichlet series $\sum_{c,d} \frac{S(0,\ell;c)}{(cd)^{1 +
2\sigma + 2s}}$ collapses to the finite product
\begin{align*}&\prod_{p|\ell}\left(1 - \frac{1}{p^{1 + 2\sigma +
2s}}\right)^{-1}\left(1 + \frac{p-1}{p^{1 + 2\sigma + 2s}} -
\frac{p}{p^{2 + 4\sigma +4s}}\right) \\&\qquad= \prod_{p|\ell} \left(1 +
\frac{1}{p^{2\sigma + 2s}}\right) = \ell^{-\sigma - s} \tau_{s+\sigma}(\ell).
 \end{align*}  Combining these steps we arrive at 
\begin{align*}
(\ref{off_diagonal_main})&= o(1) + 
2i^k\Re\Biggl\{\frac{\zeta(1-2it)(2\pi)^{2\sigma + 2it}}{\ell^{1/2 -
it}}\\& \times
\frac{1}{2\pi
i} \int_{(3)} \tau_{s + \sigma}(\ell) \frac{\Gamma(\sigma +
\frac{k}{2} -it + s)\Gamma(-\sigma + \frac{k}{2} -it -s)}{\Gamma(\sigma +
\frac{k}{2} + it)\Gamma(\sigma + \frac{k}{2} - it)} \hat{H}(s) ds\Biggr\}.
\end{align*}

Repeating these steps, one proves that the main terms coming from
$\tilde{g}_{c,d}$ are (again with error $o(1)$)
\begin{align*}
&2i^k\Re\Biggl\{\frac{\zeta(1-2it)(2\pi)^{2\sigma + 2it}}{\ell^{1/2 -
it}}
\\&\qquad\times\frac{1}{2\pi
i} \int_{(3)} \tau_{-s + \sigma}(\ell) \frac{\Gamma(\sigma +
\frac{k}{2} -it - s)\Gamma(-\sigma + \frac{k}{2} -it +s)}{\Gamma(\sigma +
\frac{k}{2} + it)\Gamma(\sigma + \frac{k}{2} - it)} \hat{H}(s) ds\Biggr\}.
\end{align*}
In this integral we change $s$ to $-s$.  Recall that $\hat{H}(-s) =
-\hat{H}(s)$, so that the combined contribution from the $g_{c,d}$ and
$\tilde{g}_{c,d}$ main terms is equal to 
\[
 \frac{1}{2\pi
i} \left\{ \int_{(3)} - \int_{(-3)}\right\} \left[\tau_{s + \sigma}(\ell)
\frac{\Gamma(\sigma +
\frac{k}{2} -it + s)\Gamma(-\sigma + \frac{k}{2} -it -s)}{\Gamma(\sigma +
\frac{k}{2} + it)\Gamma(\sigma + \frac{k}{2} - it)} \hat{H}(s) ds\right]
\]
Thus the two terms together are just equal to the residue of the integrand at
the pole at 0, that is,
\begin{align*}
 &2i^k\Re\left\{\zeta(1-2it)(2\pi)^{2\sigma + 2it}\frac{\tau_\sigma(\ell)}{
\ell^{1/2 -
it}}\frac{\Gamma(-\sigma + \frac{k}{2} -it)}{\Gamma(\sigma + \frac{k}{2} +
it)}\right\} \\&= 2\Re\left\{\zeta(1 - 2it)
\left(\frac{k}{4\pi}\right)^{-2\sigma
-2it}\frac{\tau_{\sigma}(\ell)}{\ell^{1/2 -it}}\right\} + O\left((1 +
t^2) k^{-1}\right)&. 
\end{align*}

\subsection{The terms containing Bessel integrals}
The term (\ref{K_sum}) is extremely small, since the
$K$-Bessel function is exponentially small for large variable and the support of
$F$ in function
$g_{c,d}$ localizes the variable to be of size at least $k
\sqrt{\frac{n}{\ell}}$. The term (\ref{J_sum}) requires some more care,
and we get cancellation from the changing rate of oscillation of the
$J$-Bessel function  in its transition region.

The integral in the term (\ref{J_sum}) is equal to
\begin{align*}
\int_0^\infty  W_{k, \sigma + it}(d^2
x)J_{k-1}\left(\frac{4\pi}{c} \sqrt{\ell x}\right) F\left(\frac{4\pi}{c}
\sqrt{\ell x}\right)
J_{2it}^+\left(\frac{4\pi}{c}\sqrt{nx}\right) \frac{dx}{x^{1/2 +
\sigma}}
\end{align*}
Substituting $y = \frac{4\pi}{c}\sqrt{\ell x}$ we obtain
\begin{align*}
 2\pi(\ref{J_sum}) &= \frac{(4\pi)^{1 + 2\sigma}}{\ell^{1/2-\sigma}}
\sum_{cd < 10000\sqrt{\ell}}\frac{1}{(cd)^{1 + 2\sigma}} \sum_{n = 1}^\infty
\tau_{it}(n)S(0, \ell-n;c) \\ &\qquad\qquad
\times\int_0^\infty W_{k, \sigma + it}\left(\frac{c^2d^2 y^2}{(4\pi)^2
\ell}\right) J_{k-1}(y)J_{2it}^+\left(y\sqrt{\frac{n}{\ell}}\right)F(y)
y^{-2\sigma}dy.
\end{align*}
Now replace $J_{2it}^+$ with its asymptotic expansion
\begin{align*}J_{2it}^+\left(y\sqrt{\frac{n}{\ell}}\right)&=
-\sqrt{\frac{2\pi}{y} \sqrt{\frac{\ell}{n}}}   \sin\left(y\sqrt{\frac{n}{\ell}}-
\frac{\pi}{4}\right) \left[1 - \frac{P(2it)\ell }{128
y^2 n}\right]
\\& \qquad -\pi \cos\left(y \sqrt{\frac{n}{\ell}} -
\frac{\pi}{4}\right)
\frac{-4t^2 -
\frac{1}{4}}{2y}+
O\left(\frac{(1 +t^6)\ell^{\frac{3}{2}}}{y^3n^{\frac{3}{2}}}\right).
\end{align*}
Using the integral bound in Lemma \ref{average_size_bound}, the error 
contributes
$O(\frac{\ell^{2+\sigma}(1 + t^6)}{k^{5/2+2\sigma -\epsilon}})$.  In the
remaining terms we can integrate by parts several times to truncate the sum over
$n$ at $n < \ell k^{\epsilon}$, with negligible error.  We show only how to
bound the contribution from integrating against the main term 
\begin{equation}\label{main_part_of_bessel}-\sqrt{\frac{2\pi}{y}
\sqrt{\frac{\ell}{n}}} 
\sin\left(y\sqrt{\frac{n}{\ell}}
- \frac{\pi}{4}\right); \end{equation} the rest of the main term can be handled
in
exactly the same way, and it produces an error of smaller size.

We will prove the following lemma.

\begin{lem}\label{B_bound}  We have the bound
\begin{align*}\tag{B}\label{main_integral_bound}
 \int_0^\infty W_{k, \sigma + it}\left(\frac{c^2d^2 y^2}{(4\pi)^2
\ell}\right) J_{k-1}(y)\sin\left(y\sqrt{\frac{n}{\ell}} -
\frac{\pi}{4}\right)F(y)
y^{-1/2-2\sigma}dy&\\ \ll \ell^{1/4-\sigma} k^{-1/2 -
2\sigma+\epsilon}&.
\end{align*}
\end{lem}
Assuming this bound for the moment we find that the contribution to
(\ref{J_sum}) from integration against (\ref{main_part_of_bessel}) is 
\[
 \ll \frac{1}{k^{1/2+2\sigma-\epsilon}} \sum_{cd < 10000
\sqrt{\ell}}\frac{1}{(cd)^{1 + 2\sigma}} \sum_{n \ll
\ell k^\epsilon} \frac{|S(0, \ell-n;c)|}{n^{1/4- \epsilon}}
\]
Here the $n = \ell$ term contributes $\ll \ell^{1/4-\sigma}
k^{-1/2-2\sigma + \epsilon}$  while the $n \neq \ell$ terms give
\[ \ll \frac{1}{k^{1/2+2\sigma-\epsilon}}  \sum_{\substack{n \ll \ell
k^\epsilon\\ n \neq \ell}} \frac{1}{n^{1/4-\epsilon}}
\sum_{\substack{c_1 | n-\ell\\ c_1 c_2 d \leq 10000 \sqrt{\ell}}}
\frac{1}{c_1^{2\sigma}c_2^{1 + 2\sigma}d^{1 + 2\sigma}} \ll
\ell^{3/4}k^{-1/2 -2\sigma + \epsilon},\] and both of these bounds
suffice for the theorem.  
The term corresponding to (\ref{J_sum}) coming from $\tilde{g}$ is handled in
an analogous way, so it only remains to prove the bound
(\ref{main_integral_bound}).

\begin{proof}[Proof of Lemma \ref{B_bound}]
We split the integral into the
ranges $y < k - k^{1/3}$, $k-k^{1/3}
< y < k+ k^{1/3}$, and  $k + k^{1/3}<y< 2000k\sqrt{\ell}$.  

For $y < k- k^{1/3}$ we set $y = k - k^\Delta$ so that $w =\sqrt{\frac{k^2}{x^2}
-1}$ satisfies $w \asymp k^{(\Delta -1)/2}$.  Then the bound from
(\ref{just_below}) 
\[J_{k-1}(y) \ll \frac{e^{kw -k \tanh^{-1}w}}{\sqrt{kw}} + O(k^{-4/3})\] easily
suffices for the result, since for small $w$,  \[kw - k \tanh^{-1}w \sim
-\frac{kw^3}{3} \asymp -k^{(3 \Delta - 1)/2}.\] 

For $k-k^{1/3}<y<k + k^{1/3}$ we
bound
simply $J_{k-1}(y)\ll k^{-1/3}$, so that this part also contributes $\ll
k^{-1/2-2\sigma}$.

In the remaining part of
the integral we have from (\ref{just_above})  (set $k' = k-1$), 
\[J_{k'}(y) = \sqrt{\frac{2}{\pi k' w}} \cos\left(k'w -k' \tan^{-1}w -
\frac{\pi}{4}\right) + O\left(k^{-4/3} + \frac{1 + w^{-2}}{k^{3/2}w^{3/2}}
\right)\] with $w =
\sqrt{\frac{y^2}{{k'}^2}-1}.$ For $y > 2k$ we have $w \gg 1$, while for $k
+k^{1/3} < y < 2k$ we have $w \asymp \left(\frac{y-k}{k}\right)^{1/2}$. 
Therefore, integration of the error term
produces
\[ \ll \ell^{1/4 - \sigma} k^{-5/6 -2\sigma}  + \int_{k +
k^{1/3}}^{2k} \frac{k^{1/4}}{y^{1/2 + 2\sigma}(y-k)^{7/4}} \ll \ell^{1/4
- \sigma} k^{-5/6 -2\sigma} + k^{-1/2 -2\sigma}.\]  

Now consider a diadic interval $[k+A, k+2A]$ with $A > k^{1/3}$.  On
such an interval 
we have that $w$ is
fixed to within a constant.  Moreover,\[\sqrt{\frac{2}{\pi k'w}}\cos\left(k'w
-k' \tan^{-1}w -
\frac{\pi}{4}\right)\sin\left(y\sqrt{\frac{n}{\ell}} -
\frac{\pi}{4}\right)\] may be written as a linear combination of exponentials of
the form \[\sqrt{\frac{2}{\pi k' w}}e^{i\left[\pm\left(k'w -k' \tan^{-1}w -
\frac{\pi}{4}\right) \pm\left(y\sqrt{\frac{n}{\ell}} -
\frac{\pi}{4}\right)\right]} = G(y)e^{iF(y)}.\] By further subdividing $[k+A,
k+2A]$ into $O(1)$ subintervals we may assume that $\frac{F'(y)}{G(y)}$ is
monotonic. Recalling (\ref{phase_change}),
\[\frac{d^2}{dy^2} (k'w - k'\tan^{-1}w) = \frac{k'}{y^2 w},\] we obtain from
 Lemma \ref{oscillatory_bound} that for each $B \in [k + A, k+2A]$, 
\begin{align*} \int_{k+A}^B  \sqrt{\frac{2}{\pi k'w}}\cos\left(k'w
-k' \tan^{-1}w -
\frac{\pi}{4}\right)&\sin\left(y\sqrt{\frac{n}{\ell}} -
\frac{\pi}{4}\right)dy \\ &\ll \frac{1}{\sqrt{kw}} \sqrt{\frac{B^2 w}{k}} \ll 1
+
\frac{A}{k}.\end{align*}  Thus summing diadically we
conclude that for all $z \in [k + k^{1/3}, 2000k \sqrt{\ell}]$ we have that
\[I_z = \int_{k + k^{1/3}}^z\sqrt{\frac{2}{\pi k'w}}\cos\left(k'w
-k' \tan^{-1}w -
\frac{\pi}{4}\right)\sin\left(y\sqrt{\frac{n}{\ell}} -
\frac{\pi}{4}\right)dy\] is bounded by $\ll \frac{z \log \ell}{k}.$
Write
\begin{align*}\int_{k + k^{1/3}}^{2000k\sqrt{\ell}} &W_{k, \sigma +
it}\left(\frac{c^2d^2 y^2}{(4\pi)^2
\ell}\right) \sqrt{\frac{2}{\pi k'w}}\cos\left(k'w
-k' \tan^{-1}w -
\frac{\pi}{4}\right)\\& \qquad\qquad\qquad\qquad\qquad
\times\sin\left(y\sqrt{\frac{n}{\ell}} -
\frac{\pi}{4}\right)F(y)\frac{dy}{y^{1/2 + 2\sigma}} \\ & = \int_{k +
k^{1/3}}^{2000k\sqrt{\ell}}W_{k, \sigma +
it}\left(\frac{c^2d^2 y^2}{(4\pi)^2
\ell}\right) F(y)
y^{-1/2-2\sigma}dI_y\end{align*} and integrate by parts.  Substituting
our absolute bound for $I_y$ and the bounds
\[\frac{\partial}{\partial y} W_{k, \sigma +
it}\left(\frac{c^2d^2 y^2}{(4\pi)^2
\ell}\right) \ll \frac{1}{y}, \qquad \qquad F'(y) \ll \frac{1}{y}\]
gives the result $\ll
k^{-1/2-2\sigma}\ell^{1/4 -\sigma}\log
\ell.$  This completes the bound (\ref{main_integral_bound}).
\end{proof}

\section{Mollification}\label{mollifier_section}
Write the inverse of the $L$ function $L(s;f)$
as 
\[L(s;f)^{-1} = \sum_{n = 1}^\infty \frac{a_f(n)}{n^s} = \prod_p \left(1 -
\frac{\lambda_f(p)}{p^s} + \frac{1}{p^{2s}}\right), \qquad \qquad \Re(s) > 1.\]
The coefficients $a_f(n)$ are supported on cube-free numbers, and for $m,n$
square-free, $(m,n) = 1$ we have $a_f(mn^2) = \mu(m)\lambda_f(m)$.  We define a
mollifier for $L(s;f)$ by
\begin{equation}\label{mollifier}M\left(s;f\right) = \sum_{n =
1}^\infty
\frac{a_f(n)F(s(n))}{n^{s}}.\end{equation}  Here $s(n) = \prod_{p|n} p$ denotes
the
squarefree
kernel of $n$ and $F(n)$ is a cut-off function to be given explicitly later,
but for which we stipulate $F(n) \ll n^\epsilon$ and $F(n) = 0$ for $n > M =
k^\theta$ for some $\theta < \frac{1}{5}$.
In particular, we have the representation
\begin{align} &\notag\left|M\left(1/2 +\sigma + it;f\right)\right|^2 =\left|
{\sum_{(m, n)
= 1}}^\flat \frac{\mu(m) \lambda_f(m)
F(mn)}{m^{1/2 + \sigma + it}n^{1 + 2\sigma +2it}}\right|^2 \\
\label{mollifier_squared}&\qquad=
{\sum_{d}}^\flat \frac{1}{d^{1 + 2\sigma}} {\sum_{\substack{(m_1,n_1)
= 1\\ (m_2,n_2) = 1\\ (m_1n_1m_2n_2,d) =1 }}}^\flat
\frac{\mu(m_1)\mu(m_2)\lambda_f(m_1m_2)F(dm_1n_1)F(dm_2n_2)}{m_1^{1/2 +
\sigma + it}m_2^{1/2 + \sigma - it}n_1^{1 + 2\sigma + 2it}n_2^{1 +
2\sigma - 2it}}
\end{align}
From this representation, we find
\begin{align}\notag  & {\sum_{f \in H_k}}^h
\left|M L \left(\frac{1}{2} + \sigma +
it;f\right)\right|^2 \\&\notag \qquad= 
 {\sum_d}^\flat \frac{1}{d^{1 + 2\sigma}}{\sum_{\substack{(m_1,n_1) = 1\\
(m_2,n_2) = 1\\
(m_1n_1m_2n_2, d)=1}}}^\flat \frac{\mu(m_1)\mu(m_2)
F(m_1n_1d)F(m_2n_2d)}{m_1^{1/2 + \sigma + it}m_2^{1/2 + \sigma
- it}n_1^{1 + 2\sigma + 2it}n_2^{1 + 2\sigma - 2it}} \\&
\label{mollified_moment}
\qquad\qquad\qquad\qquad\qquad\times
{\sum_{f \in H_k}}^h \lambda_f(m_1m_2)\left|L\left(\frac{1}{2} + \sigma +
it;f\right)\right|^2
\end{align}
Substituting our expression for the twisted second moment, we find that
\begin{align*} \text{expr. }&(\ref{mollified_moment}) + O(k^{5\theta/2 -
2\sigma -1/2 + \epsilon}) \\ &= {\sum_d}^\flat
\frac{1}{d^{1 + 2\sigma}}{\sum_{\substack{(m_1,n_1) = 1\\
(m_2,n_2) = 1\\
(m_1n_1m_2n_2, d)=1}}}^\flat \frac{\mu(m_1)\mu(m_2)
F(m_1n_1d)F(m_2n_2d)}{m_1^{1/2 + \sigma + it}m_2^{1/2 + \sigma
- it}n_1^{1 + 2\sigma + 2it}n_2^{1 + 2\sigma - 2it}}\\
& \qquad \times \Biggl\{ 
\zeta(1 +
2\sigma) \frac{\tau_{it}(m_1m_2)}{(m_1m_2)^{1/2 + \sigma}}
 +  
\zeta(1-2\sigma) 
\left(\frac{k}{4\pi}\right)^{-4\sigma}\frac{\tau_{it}(m_1m_2)}{(m_1m_2)^{1/2 - \sigma}}
\\&
\qquad \qquad\qquad\qquad+ i^k2 \Re \left[ \zeta(1 +
2it) \left(\frac{k}{4\pi}\right)^{ - 2\sigma +
2it}\frac{\tau_{\sigma}(m_1m_2)}{(m_1m_2)^{1/2 +
it}} \right] 
\Biggr\}\\&= \cS_1 + \cS_2 +2i^k \Re \cS_3 
\end{align*}
We may rewrite the divisor sums 
\[\tau_s(m_1m_2) = \sum_{\ell_1\ell_2 =
m_1m_2}\left(\frac{\ell_1}{\ell_2}\right)^s =
\sum_{g|(m_1,m_2)}\mu(g)
\tau_s\left(\frac{m_1}{g}\right)\tau_s\left(\frac{m_2}{g}\right). \] Doing so
and shifting the sum
over $g$ to the front we separate the variables $m_1$ and $m_2$.  Thus we find:
\[\cS_1 = \zeta(1 + 2\sigma) {\sum_{d}}^\flat
\frac{1}{d^{1 + 2\sigma}} \sum_{(g,d) =1} \frac{\mu(g)}{g^{2 +
4\sigma}}\left|{\sum_{\substack{(m,n)=1\\ (mn,gd)=1}}}^\flat
\frac{\mu(m)\tau_{it}(m)
F(mngd)}{m^{1 + 2\sigma + it}n^{1 + 2\sigma + 2it}}\right|^2\] and similar
expressions for
$\cS_2$, and $\cS_3$, although the inner sum in $\cS_3$ is not a square. In
fact, there is substantial cancellation in the inner summation
for
$\cS_1$ above coming from the M\"{o}bius function.  The sum is in fact equal
to
\[\cS_1 = \zeta(1 + 2\sigma) {\sum_{d}}^\flat
\frac{1}{d^{1 + 2\sigma}} \sum_{(g,d) =1} \frac{\mu(g)}{g^{2 +
4\sigma}}\left|{\sum_{\substack{(m,gd)=1}}}^\flat \frac{\mu(m)
F(mgd)}{m^{1 + 2\sigma }}\right|^2\]
We also find
\begin{align*}\cS_2 = \zeta(1 - 2\sigma)
\left(\frac{k}{4\pi}\right)^{-4\sigma}{\sum_{d}}^\flat
&\frac{1}{d^{1 + 2\sigma}} \sum_{(g,d) =1}
\frac{\mu(g)}{g^2}\\&\qquad\times\left|{\sum_{\substack{(m,n)=1\\
(mn,gd)=1}}}^\flat
\frac{\mu(m)\tau_{it}(m)
F(mngd)}{m^{1+ it}n^{1 + 2\sigma + 2it}}\right|^2,\end{align*}

\begin{align*}\cS_3 =& \zeta(1 + 2it)
\left(\frac{k}{4\pi}\right)^{-2\sigma + 2it} {\sum_{d}}^\flat
\frac{1}{d^{1 + 2\sigma}} \sum_{(g,d) =1} \frac{\mu(g)}{g^{2 +
2\sigma+2it}}\\ &
\times{\sum_{\substack{(m_1,gd)=1}}}^\flat \frac{\mu(m_1)
F(m_1gd)}{m_1^{1 + 2it}}{\sum_{\substack{(m_2, gd) = 1 \\ m_2^1 m_2^2
m_2^3 =
m_2 }}}^\flat \frac{\mu(m_2^1)\mu(m_2^2)F(m_2gd)}{(m_2^1)(m_2^2)^{1 +
2\sigma}(m_2^3)^{1 +
2\sigma - 2it}}.
\end{align*}

\subsection{Upper bound for the harmonic mollified second moment}
We now fix the cut-off function $F$ and prove an upper bound for the mollified
second moment.  Let 
\begin{equation}\label{cutoff_function}
 F(x) = \left\{\begin{array}{lll} 1 &&0 \leq x \leq \sqrt{M}\\
P\left(\frac{\log(\frac{M}{x})}{\log M}\right) && \sqrt{M} \leq x \leq M\\
0 && x \geq M \end{array}\right.
\end{equation}
where $P(t) = 12t^2 -16t^3$ satisfies $P(\frac{1}{2}) = 1$ and $P'(\frac{1}{2})
= P(0) = P'(0) = 0$.  The function $F$ is continuously differentiable.  It's
Mellin transform is equal to 
\begin{equation}\label{mellin_F}
 \hat{F}(s) = \frac{24(M^s + M^{\frac{s}{2}})}{s^3(\log M)^2} - \frac{96(M^s -
M^{\frac{s}{2}})}{s^4 (\log M)^3}.
\end{equation}
It has a simple pole at $s = 0$ with residue 1.  Also, expanding $\hat{F}(s)$
in it's Laurent series about 0,
\begin{equation}\label{laurent}\hat{F}(s) = \frac{1}{s} + \sum_{n = 0}^\infty
c_n s^n\end{equation}
the coefficients $c_n$ satisfy the bound
\[c_n \ll \frac{(\log M)^{n+1}}{(n+3)!}.\]

For this choice of cut-off function we prove
\begin{prp}\label{mollifier_bound}
 Let $M = k^\theta$ with $\theta < \frac{1}{5}$ and
suppose $
\frac{1}{\log k} < \sigma $ and $|t|<k^{1/4}$. For $M(\frac{1}{2} +
\sigma + it;f)$ defined by
(\ref{mollifier}) and cut-off function $F$ as in (\ref{cutoff_function}) we have
\[ {\sum_{f \in H_k}}^h \left|M L \left(\frac{1}{2} + \sigma +
it;f\right)\right|^2 \leq 1+
O(k^{5\theta/2 - 2\sigma - 1/2 + \epsilon}) + O(k^{- \theta
\sigma}).
\]
\end{prp}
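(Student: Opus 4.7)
The three pieces $\cS_1, \cS_2, \cS_3$ have already been separated in the preceding subsection, with the error $O(k^{3\theta - 2\sigma - 1/2 + \epsilon})$ from the substitution of Theorem \ref{twisted_moment} absorbed. The plan is therefore to show $\cS_1 = 1 + O(k^{-\theta\sigma})$ and $|\cS_2|, |\cS_3| \ll k^{-\theta\sigma}$ separately.

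For $\cS_1$ I would apply Mellin inversion $F(x) = \frac{1}{2\pi i}\int_{(\epsilon)} \hat{F}(s) x^{-s}\, ds$ to each of the two inner factors of the squared mollifier. This turns the inner sum over $m$ (which is supported on squarefree integers coprime to $gd$) into $\zeta(1+2\sigma+s_j)^{-1}$ multiplied by Euler factors at primes dividing $gd$, so that $\cS_1$ becomes a double contour integral over $s_1,s_2$ whose integrand has been multiplied by the outer sums over squarefree $d$ and $g$. Those outer sums, after unfolding the coprimality conditions, also collapse prime by prime into an Euler product; working at a single prime, the local factor rearranges so that one copy of $\zeta(1+2\sigma)^{-1}$ is freed, which cancels the $\zeta(1+2\sigma)$ placed out front. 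What remains is a double integral
\[
\cS_1 = \frac{1}{(2\pi i)^2}\int_{(\epsilon)}\int_{(\epsilon)} \hat{F}(s_1)\hat{F}(s_2)\, Z(s_1,s_2;\sigma)\, ds_1\, ds_2,
\]
with $Z$ a convergent Euler product in a neighbourhood of the origin satisfying $Z(0,0;\sigma) = 1$. Shifting both contours to $\Re(s_j) = -c\sigma$ for a small constant $c$, the main term is the residue at the double pole $s_1 = s_2 = 0$ of $\hat F(s_1)\hat F(s_2)$; by the choice of polynomial $P(t) = 12t^2 - 16t^3$ with $P(0)=P'(0)=0$ and $P(1/2)=1$, $P'(1/2)=0$, $\hat F(s)$ has only a simple pole at $s=0$ with residue $1$, and the residue at $(0,0)$ evaluates exactly to $1$. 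The shifted double integral is bounded by $M^{-2c\sigma}(\log M)^{O(1)}/\sigma^{O(1)}$, which is $O(k^{-\theta\sigma})$ for $\sigma > 1/\log k$.

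For $\cS_2$ the prefactor $(k/4\pi)^{-4\sigma}$ supplies $k^{-4\sigma}$ outright; the remaining fourfold sum is of mollifier type of length $M = k^\theta$, and running the same Mellin inversion as in $\cS_1$ (or bounding trivially by $M^{O(\sigma)}(\log M)^{O(1)}$) yields a factor $k^{O(\theta\sigma)}$. Hence $|\cS_2| \ll k^{-4\sigma + O(\theta\sigma)} \ll k^{-\theta\sigma}$ once $\theta$ is sufficiently small. For $\cS_3$ the prefactor has absolute value $k^{-2\sigma}$ and the inner $m_2$-sum $\sum_{m_2^1 m_2^2 m_2^3 = m_2}\mu(m_2^1)\mu(m_2^2)(\cdots)^{-1}$ factors as an absolutely convergent Euler product; Mellin inversion then bounds the full sum by $k^{O(\theta\sigma)}$, giving $|\cS_3| \ll k^{-2\sigma + O(\theta\sigma)} \ll k^{-\theta\sigma}$.

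The main obstacle is the precise evaluation of $\cS_1$: we need the cancellation between the outer $\zeta(1+2\sigma)$ and the Euler products arising from the $d,g,m$ sums to be exact, and the residue at the double pole to equal exactly $1$, so that the leftover error is genuinely $k^{-\theta\sigma}$ rather than $(\log k)^{-1}$. The cutoff polynomial $P$ is engineered precisely so that $\hat F$ has a simple (not higher order) pole at the origin and is well controlled on shifted contours, enabling the contour shift to distance $c\sigma$ that converts the length parameter $M = k^\theta$ into the power saving $k^{-\theta\sigma}$. The hypothesis $\sigma > 1/\log k$ is essential here, as it ensures that the contour shift is far enough to beat the polynomial factors in $1/\sigma$ that arise from differentiating $\hat F$ and $Z$ at the origin.
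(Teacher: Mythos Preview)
Your outline for $\cS_1$ is roughly on the right track, though the contour shift is more delicate than you indicate: after Mellin inversion the integrand carries a factor $\zeta(1+2\sigma+\alpha+\beta)$ in the numerator, so when you push $\alpha$ leftward you cross a second pole at $\alpha=-2\sigma-\beta$, whose residue must itself be estimated (the paper moves that residue to $\Re(\beta)=-\sigma$ and gets $O(k^{-\theta\sigma}/\log k)$, exploiting the $1/(s^3(\log M)^2)$ decay of $\hat F$).

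The genuine gaps are in $\cS_2$ and $\cS_3$, and both stem from the same oversight: you have ignored the zeta factors sitting in front.

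For $\cS_2$ the prefactor is $\zeta(1-2\sigma)(k/4\pi)^{-4\sigma}$, and $|\zeta(1-2\sigma)|\sim 1/(2\sigma)$ as $\sigma\to 0^+$. At $\sigma=1/\log k$ this is of size $\log k$, so your claimed bound $|\cS_2|\ll k^{-4\sigma+O(\theta\sigma)}$ is already off by a logarithm and does not give $O(k^{-\theta\sigma})$. Moreover the inner sums in $\cS_2$ sit on the $1$-line (the $m$-exponent is $1+it$, not $1+2\sigma+it$), so neither a trivial bound nor Mellin inversion recovers this loss. The paper does not bound $|\cS_2|$ at all: grouping the $d,g$ sums by their squarefree product $e=dg$, the coefficient becomes $\prod_{p\mid e}\bigl(p^{-1-2\sigma}-p^{-2}\bigr)\ge 0$ for $\sigma<\tfrac12$, so $\cS_2/\zeta(1-2\sigma)\ge 0$. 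Since $\zeta(1-2\sigma)<0$ on $(0,\tfrac12)$ this gives $\cS_2\le 0$, and it is simply discarded from the \emph{upper} bound.

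For $\cS_3$ the prefactor is $\zeta(1+2it)(k/4\pi)^{-2\sigma+2it}$, and $|\zeta(1+2it)|\sim 1/(2|t|)$ is unbounded as $t\to 0$; the proposition allows $t$ arbitrarily small. So $|\cS_3|$ cannot be controlled directly, and your bound $|\cS_3|\ll k^{-2\sigma+O(\theta\sigma)}$ is false in that regime. The paper bounds $\Re\cS_3$ instead. For $|t|>1/(4\log k)$ a contour shift as in $\cS_1$ shows the Mellin integral is $O(\zeta(1+2it)^{-1})+O(k^{-\theta\sigma}/\log k)$, and the first term cancels the outer $\zeta(1+2it)$. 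For $|t|\le 1/(4\log k)$ one writes $2\Re\cS_3$ as an integral over the circle $|w|=1/(2\log k)$ against $\zeta(1+w)\bigl(\tfrac{1}{w-2it}+\tfrac{1}{w+2it}\bigr)$; the integrand is $O(k^{-\theta\sigma}\log k)$ and the circle has length $O(1/\log k)$, giving $O(k^{-\theta\sigma})$. Both mechanisms exploit cancellation that bounding $|\cS_3|$ cannot see.
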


\begin{proof}
 We prove $\cS_1 = 1 + O(K^{- \theta \sigma})$ and
$\Re(\cS_3) = O(K^{ -\theta \sigma})$.  This suffices because $\cS_2 \leq
0$ since $\zeta(1-2\sigma) < 0$.

By Mellin inversion
\begin{equation}\label{S1_integral}\cS_1 = \zeta(1 + 2\sigma)\left(\frac{1}{2\pi
i}\right)^{2}\int_{(2)}\int_{(2)}\hat{F}(\alpha)\hat{F}(\beta)G(\alpha, \beta;
\sigma)d\alpha d\beta
\end{equation}
where
\begin{align*}
 G(\alpha, \beta; \sigma) &=
{\sum_{d}}^\flat
\frac{1}{d^{1 + 2\sigma+\alpha + \beta}} \sum_{(g,d) =1} \frac{\mu(g)}{g^{2 +
4\sigma+\alpha + \beta}}{\sum_{\substack{(m_1,gd)=1\\(m_2, gd) = 1}}}^\flat
\frac{\mu(m_1)\mu(m_2)
}{m_1^{1 + 2\sigma + \alpha}m_2^{1 + 2\sigma + \beta}}
\\&= \prod_{p}\left(1 - p^{-1-2\sigma - \alpha} -
p^{-1-2\sigma - \beta} + p^{-1 - 2\sigma - \alpha -\beta}\right)\\
&= \frac{\zeta(1 + 2\sigma + \alpha + \beta)}{\zeta(1 + 2\sigma +
\alpha)\zeta(1 + 2\sigma + \beta)}H(\alpha, \beta; \sigma)
\end{align*}
The Euler product defining $H$ converges absolutely in the region \[\alpha +
2\sigma > -\frac{1}{2},\quad \beta + 2\sigma > -\frac{1}{2},  \quad\alpha +
\beta +
2\sigma > - \frac{1}{2}.\]

To evaluate the integral, shift both contours to the line $\Re(\alpha) =
\Re(\beta) = \frac{1}{\log k}$ and truncate the $\beta$ integral at
$|\Im(\beta)| \leq k$ with error $O(k^{-2 + \epsilon})$.  Then shift the
$\alpha$ integral to the contour $\mathcal{C}$ given by
\[\mathcal{C} := \{\alpha: \Re(\alpha) = -2\sigma - \log
^{3/4}(2 + |\Im(\alpha)|)\}.\] 

In shifting the $\alpha$ contour to $\mathcal{C}$ we encounter poles at $\alpha
= 0$ and $\alpha = -2\sigma - \beta$.  This first pole yields a residue
\begin{equation}\label{first_pole_S1}\frac{1}{\zeta(1 + 2\sigma)} \frac{1}{2\pi
i}\int_{\frac{1}{\log k} -
ik}^{\frac{1}{\log k} + i k}\hat{F}(\beta)
d\beta = \frac{1 + O(k^{-2})}{\zeta(1 + 2\sigma)}.\end{equation}

The second pole has residue
\[ \frac{1}{2\pi i} \int_{1 + \frac{1}{\log k} -
ik}^{1 + 1/\log k + i k} \hat{F}(\beta)
\hat{F}(-2\sigma - \beta) \frac{H(-2\sigma - \beta, \beta;
\sigma)}{\zeta(1-\beta)\zeta(1+2\sigma + \beta)}d\beta\]
Here we can extend the integration to the full line, and shift the contour to
$\Re(\beta) = -\sigma$.  On this line, $H(-\sigma + is, -\sigma - is;\sigma)$
is uniformly bounded, and so the integral is bounded by
\begin{align}\label{second_res_bound} \int_{-\infty}^\infty
\left|\frac{\hat{F}(-\sigma + is)}{\zeta(1+\sigma -is)}\right|^2 ds
 &\ll M^{-\sigma}\Biggl[\int_{-1}^1 \left|\frac{\log^{-2}M}{|\sigma +
 is|^2 } + \frac{\log^{-3} M}{|\sigma + is|^3 }\right|^2 d|s|\\&\notag
\qquad\qquad\qquad\qquad\qquad + O\left((\log M)^{-4}\right)\Biggr].\end{align}
Now using $(a + b)^2 \leq 2(a^2 + b^2)$, the right hand side is bounded by 
\[
k^{-\theta \sigma} \left[O\left((\log k)^{-4}\right) +  \frac{1}{(\log
M)^4}\int_{-\infty}^\infty \frac{ds}{(s^2 + \sigma^2)^2} + \frac{1}{(\log M)^6}
\int_{-\infty}^\infty \frac{ds}{(s^2 + \sigma^2)^3}\right] 
\]
Since $\sigma \geq \frac{1}{\log k}$ we deduce that
the second residue is 
$\ll \frac{k^{-\theta \sigma}}{\log
k}.$
The remaining integral, for $\alpha$ on $\mathcal{C}$, is bounded using standard
bounds for $\zeta$ in the zero-free region and is quite small.  Since
with $\zeta(1 + 2\sigma) \ll \log k$ we have the claimed evaluation of $\cS_1$.

In bounding $2 \Re(\cS_3)$ we handle separately the cases
$t \leq \frac{1}{4\log k}$ and $t > \frac{1}{4\log k}$.

When $t > \frac{1}{4\log k}$ we bound $\cS_3$ in magnitude as we did $\cS_1$.
By Mellin inversion
\begin{equation}\label{int_for_S3}\cS_3 = \zeta(1 +
2it)\left(\frac{k}{4\pi}\right)^{ - 2\sigma + 2it} \left(\frac{1}{2\pi
i}\right)^2 \int_{(2)}\int_{(2)} \hat{F}(\alpha)\hat{F}(\beta) G(\alpha, \beta;
\sigma, t)d\alpha d\beta\end{equation}
where now $G(\alpha, \beta; \sigma,t)$ is given by
\begin{align*}
 G(\alpha, \beta; \sigma, t) &=
{\sum_{d}}^\flat
\frac{1}{d^{1 + 2\sigma+\alpha +\beta}} \sum_{(g,d) =1} \frac{\mu(g)}{g^{2 +
2\sigma+2it+\alpha + \beta}}{\sum_{\substack{(m_1m_2,gd)=1}}}^\flat
\frac{\mu(m_1)
}{m_1^{1 + 2it+\alpha}}
\\& \qquad \times\sum_{m_2^1 m_2^2 m_2^3 =
m_2} \frac{\mu(m_2^1)\mu(m_2^2)}{(m_2^{1})^{1 + \beta}(m_2^2)^{1 +
2\sigma + \beta}(m_2^3)^{1 +
2\sigma - 2it + \beta}}
\\&= \prod_p \Biggl[1- \frac{1}{p^{1 + \beta}} 
- \frac{1}{p^{1 + 2\sigma + \beta}} + \frac{1}{p^{1 + 2\sigma - 2it +
\beta}} - \frac{1}{p^{1 + 2it + \alpha}} \\&
\qquad\qquad\qquad\qquad + \frac{1}{p^{1 +
2\sigma +
\alpha + \beta}}+ \frac{1}{p^{2 +
2it + \alpha + \beta}} - \frac{1}{p^{2 + 2\sigma +\alpha + \beta}}\Biggr]
\\ &= \frac{\zeta(1 + 2\sigma -2it + \beta) \zeta(1 + 2\sigma + \alpha +
\beta)}{\zeta(1 + \beta)\zeta(1 + 2\sigma + \beta) \zeta(1 + 2it +
\alpha)}H(\alpha, \beta; \sigma, t)
\end{align*}
Here the Euler product defining $H(\alpha,
\beta;\sigma, t)$ converges absolutely for \[\min(\Re(\alpha),
\Re(\beta), \Re(\alpha + \beta)) > -\frac{1}{2}.\]

To evaluate the integral in (\ref{int_for_S3}), shift $\alpha$ and $\beta$
contours to the lines $\Re(\alpha)= \Re(\beta) = \frac{1}{\log k}$.  We may
assume that  $\sigma < \frac{100 \log \log k}{\log k}$ since otherwise
$k^{-2\sigma} < \frac{k^{-\sigma}}{(\log k)^{100}}$ and the integral
may be bounded directly using standard bounds for $\zeta$ and $\zeta^{-1}$ to
the right of the 1-line. Now
truncate the $\beta$ contour at $|\Im(\beta)| < k$ and shift the
$\alpha$ contour to $\mathcal{C}'$ given by
\[\mathcal{C}':= \{\alpha: \Re(\alpha) = - \log^{3/4}(2 + |\Im(\alpha +
2it)|)\}.\] In doing so we pass two poles, at $\alpha = 0$ and at
$\alpha = -\beta - 2\sigma$.  The first pole has residue
\begin{align*}&\zeta(1 + 2it)^{-1} \frac{1}{2\pi i} \int_{\frac{1}{\log k} -
ik}^{\frac{1}{\log k} + ik} \frac{\hat{F}(\beta)\zeta(1 +
2\sigma -2it + \beta)}{\zeta(1+\beta)}H(0,\beta; \sigma, t)d\beta \\&= \zeta(1 +
2it)^{-1} \left(\frac{\hat{F}(-2\sigma + 2it)}{\zeta(1 -2\sigma + 2it)}H(0,
-2\sigma + 2it; \sigma ,t)+ O(1)\right). \end{align*}  Expressing
$\hat{F}(-2\sigma + 2it)$ using either the Laurent expansion for (\ref{laurent})
for $|t| < \frac{1}{\log k}$ or the direct definition (\ref{cutoff_function})
for $|t| > \frac{1}{\log k}$, together with the bound $\frac{1}{\zeta(1 -s)}
\ll s$ valid in the standard zero-free region  we have that this residue is
$O(\zeta(1
+ 2it)^{-1})$.

The second residue is equal to 
\[\frac{1}{2\pi i} \int_{\frac{1}{\log k}- ik}^{\frac{1}{\log k} +
ik} \frac{\hat{F}(-\beta - 2\sigma) \hat{F}(\beta)\zeta(1 + 2\sigma
-2it + \beta)}{\zeta(1 + \beta) \zeta(1 + 2\sigma + \beta) \zeta(1 + 2it -
2\sigma - \beta)}H(-2\sigma - \beta,\beta)d\beta\]
Shifting this integral to the line $\Re(\beta) = -2\sigma$ (the horizontal
integrals are very small), and taking absolute values, we obtain a bound
\begin{align*} \ll \int_{-k}^{k} \frac{|\hat{F}(-2\sigma
+is)|}{|\zeta(1-2\sigma +is)|}\frac{|\hat{F}(-is)|}{|\zeta(1 -is)|} ds.
\end{align*}
Arguing as above we have for all real $s$, 
$ \frac{|\hat{F}(-is)|}{|\zeta(1
-is)|} = O(1)$ while for $|s|\leq k$,
\[
\frac{|\hat{F}(-2\sigma
+is)|}{|\zeta(1-2\sigma +is)|} \ll M^{-\sigma} \left[\frac{1}{(\log M)^2
|\sigma + is|^2} + \frac{1}{(\log M)^3 |\sigma + is|^3}\right].
\]
so that the integral is $ O(\frac{k^{-\theta \sigma}}{\log
k})$ as in the second residue calculation for $\cS_1$.  
The remaining double integral with $\alpha$ on the contour $\mathcal{C} - 2it$
is again small.   Thus for 
$\frac{1}{4\log k} <
t$,  we have \[O(\zeta(1 +
it)^{-1}) + O\left(\frac{k^{-\theta \sigma}}{\log
k}\right)\] for the integral in (\ref{int_for_S3}), which suffices since in this
range, $\zeta(1 + 2it) = O(\log k)$.

When $|t| < \frac{1}{4\log k}$, we bound $2\Re\cS_3$ to balance
the
fact that $\zeta(1 + 2it)$ can be quite large (but mostly imaginary).
Following the method of \cite{conrey_sound}, let $\cO$ be the circle $|w| =
\frac{1}{2\log k}$. By Cauchy's residue theorem
\begin{align*}
2\Re\cS_3  = \frac{1}{2\pi i} \int_{\cO}
\left(\frac{k}{4\pi}\right)^{ - 2\sigma +w}\zeta(1 + w)  \eta(w; \sigma)
\left[\frac{1}{w + 2it} + \frac{1}{w - 2it} \right]dw
\end{align*}
with
\begin{align*}
 \eta(w;\sigma)&= {\sum_{d}\frac{1}{d^{1 +
2\sigma}}}^\flat   \sum_{(g,d) = 1}
\frac{\mu(g)}{g^{2 + 2\sigma + w}}{\sum_{(m_1m_2, gd) = 1}}^\flat
\frac{\mu(m_1)F(m_1gd)F(m_2gd)}{m_1^{1 + w}}\\& \qquad\qquad\qquad\qquad
\times  \sum_{m_2^1m_2^2m_2^3 = m_2}
\frac{\mu(m_2^1)\mu(m_2^2)}{(m_2^1)(m_2^2)^{1 + 2\sigma}(m_2^3)^{1 + 2\sigma -
w}}
\end{align*}
As before,  we may assume that $\sigma <
\frac{100 \log \log k}{\log k}$. The evaluation
of $\eta(w;\sigma)$ by
Mellin inversion is exactly analogous to the integral performed in calculating
$\cS_3$ when $\frac{1}{4\log k}< |t|$: there is a main
term equal to $\zeta(1+w)^{-1}O(1)$, a secondary residue term of size
$\frac{M^{-\sigma}}{\log k}$ and a smaller error integral. Thus $\eta(w;\sigma)
= O(\frac{1}{\log k})$. Thus the integrand in the integral over $\cO$ is
$O(k^{
- \theta\sigma}\log k)$. Since the length of the integral is $O(\frac{1}{\log
k})$
the integral itself is $O(k^{ -\theta\sigma})$. 
\end{proof}

\section{Removing the harmonic weights}
The starting point for the Kowalski-Michel \cite{kowalski_michel} method  for
removing harmonic weights is the  formula (\cite{iwaniec_luo_sarnak})
\[ w_f^{-1} = \frac{L(1, \sym^2 f)}{\zeta(2)} |H_k|  +
O(\log^3 k)\]
where $L(s, \sym^2 f)$ is the symmetric square $L$-function associated to $f$,
defined by
\[L(s, \sym^2 f) = \sum_{n = 1}^\infty \frac{\rho_f(n)}{n^s} = \zeta(2s)
\sum_n \frac{\lambda_f(n^2)}{n^s}.\]
Thus the natural average is expressed as 
\begin{align*}&\frac{1}{|H_k|} \sum_{f \in H_k} \left|ML\left(1/2
+
\sigma +it; f\right)\right|^2 = \frac{1}{|H_k|} {\sum_{f \in H_k}}^h w_f^{-1}
\left|ML\left(1/2 + \sigma +it;
f\right)\right|^2\\&\qquad\qquad\qquad= \frac{1}{\zeta(2)}
{\sum_{f \in H_k}}^h L(1, \sym^2 f) \left|ML(1/2 + \sigma +it;
f)\right|^2 + O(k^{-1+\epsilon}).\end{align*}  The method replaces $L(1, \sym^2
f)$ with a short Dirichlet polynomial approximation
\[w_f(x) = \sum_{n \leq x} \frac{\rho_f(n)}{n}, \qquad \qquad x = k^{\kappa}.\] 
A minor modification to the proof of Proposition 2 of \cite{kowalski_michel}
yields the following result.
\begin{prp}
 Assume that the mollifier $M(\frac{1}{2} + \sigma + it; f)$ is such that
\begin{equation}\label{sup_bound}\sup_{f \in H_k} w_f \left|ML\left(\frac{1}{2}
+ \sigma + it; f\right)\right|^2 < k^{-\delta}, \qquad \qquad
\delta >
0\end{equation}
and
\begin{equation}\label{L2_bound}{\sum_{f \in H_k}}^h \left|ML\left(\frac{1}{2}
+ \sigma + it; f\right)\right|^2 < (\log k)^A.\end{equation}
Let $x = k^\kappa$ for some $\kappa > 0$.  Then there is a $\gamma =
\gamma(\delta, \kappa, A) > 0$ such that
\begin{align*} &\frac{1}{|H_k|} \sum_{f \in H_k} \left| ML\left(\frac{1}{2} +
\sigma +
it;
f\right)\right|^2 \\&\qquad= {\sum_{f \in H_k}}^h w_f(x)\left|
ML\left(\frac{1}{2}
+ \sigma +
it;
f\right)\right|^2 + O(k^{-\gamma}).\end{align*}
\end{prp}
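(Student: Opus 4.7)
The plan is to invoke the Iwaniec--Luo--Sarnak approximation $w_f^{-1} = \frac{L(1,\sym^2 f)}{\zeta(2)}|H_k| + O(\log^3 k)$ already cited in the text to convert the natural average into a harmonic one, and then replace $L(1,\sym^2 f)$ by its short Dirichlet polynomial $w_f(x)$. Substituting the identity gives
$$\frac{1}{|H_k|}\sum_{f\in H_k} |L\cdot M|^2 = \frac{1}{\zeta(2)}{\sum_{f\in H_k}}^h L(1,\sym^2 f)\,|L\cdot M|^2 + E,$$
where $|H_k|\asymp k$ and hypothesis \eqref{L2_bound} bound $E$ by $O(k^{-1+\epsilon})$. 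Thus it suffices to prove
$${\sum_{f\in H_k}}^h \bigl(L(1,\sym^2 f) - \zeta(2)\,w_f(x)\bigr)\,|L\cdot M|^2 \ll k^{-\gamma}.$$

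To express this difference I would choose a smooth cutoff $V$ with $V\equiv 1$ on $[0,\tfrac{1}{2}]$, $V\equiv 0$ on $[1,\infty)$, normalized so that $\tilde V(0)=1$ and $\tilde V$ has rapid decay in vertical strips. Mellin inversion yields
$$\sum_{n=1}^\infty \frac{\rho_f(n)}{n} V(n/x) = \frac{1}{2\pi i}\int_{(c)} L(1+s,\sym^2 f)\, \tilde V(s)\, x^s\, ds,\qquad c>0,$$
and shifting the contour across the simple pole at $s=0$ produces the representation $L(1,\sym^2 f) = \sum_n \rho_f(n)V(n/x)/n + I_f(x)$, where $I_f(x)$ is a contour integral on $\Re(s)=-\alpha$ for a small $\alpha>0$. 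The smooth partial sum differs from $\zeta(2)w_f(x)$ by (a) the ingredient $\zeta(2)$ absorbed into the definition of $V$ and (b) a short Dirichlet sum supported on $x/2<n\le x$; the latter may be bounded termwise via Deligne's bound $|\rho_f(n)|\ll d_3(n)$ and hypothesis \eqref{sup_bound}, contributing $\ll x^\epsilon k^{-\delta}$.

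The essential matter is then to bound the harmonic average ${\sum_f}^h I_f(x)\,|L\cdot M|^2$. My plan is to split $|L\cdot M|^2 = (w_f|L\cdot M|^2)\cdot w_f^{-1}$, apply \eqref{sup_bound} to pull out a factor of $k^{-\delta}$, and reduce via Cauchy--Schwarz in the $f$-sum to a harmonic second moment of $|L(1-\alpha+it,\sym^2 f)|^2$ (averaged together with a factor like $w_f^{-1}$, for which the mean-value identity $\sum_f^h w_f^{-1}\asymp k$ is standard). The rapid decay of $\tilde V$ effectively truncates $|t|\le k^\epsilon$. The main obstacle is that, just to the left of the $1$-line, the best individual bound available from the Phragm\'en--Lindel\"of convexity argument is $|L(1-\alpha+it,\sym^2 f)|\ll k^{O(\alpha)}$, so the contour integral suffers a polynomial loss $k^{O(\alpha)}$ competing against the gain $x^{-\alpha}=k^{-\kappa\alpha}$. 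One must pick $\alpha=\alpha(\delta,\kappa,A)$ small enough that the combined factor $k^{-\delta}\cdot k^{-\kappa\alpha+O(\alpha)}$ is $k^{-\gamma}$ for some $\gamma>0$, and this is precisely the inequality that forces $\gamma$ to depend on all three parameters and that pins down the final exponent.
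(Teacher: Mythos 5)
Your accounting loses a factor of $|H_k|\asymp k$ that is never recovered. Once you replace $|L\cdot M|^2\le k^{-\delta}w_f^{-1}$ via \eqref{sup_bound}, carrying out the $f$-sum gives
\[
{\sum_{f\in H_k}}^h |I_f(x)|\,|L\cdot M|^2 \;\le\; k^{-\delta}{\sum_{f\in H_k}}^h w_f^{-1}|I_f(x)| \;=\; k^{-\delta}\sum_{f\in H_k}|I_f(x)|,
\]
and whether you then estimate $|I_f(x)|$ pointwise by $x^{-\alpha}k^{O(\alpha)}$ via convexity, or pass to an $L^2$ average by Cauchy--Schwarz against the mean value $\sum_f^h w_f^{-1}\asymp|H_k|$, the natural $f$-sum contributes a factor $|H_k|$. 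The resulting estimate is therefore $\ll k^{1-\delta-\kappa\alpha+O(\alpha)}$, not $k^{-\delta-\kappa\alpha+O(\alpha)}$ as stated at the end. Since the proposition is asserted for \emph{every} $\delta,\kappa>0$, however small, this exponent is positive for small $\delta,\kappa$ no matter how $\alpha$ is chosen, and the bound fails. The same miscount occurs in the termwise treatment of the sharp-vs-smooth cutoff discrepancy: Deligne's bound plus \eqref{sup_bound} gives $\ll x^\epsilon k^{-\delta}\sum_f^h w_f^{-1}\asymp x^\epsilon k^{1-\delta}$, not $x^\epsilon k^{-\delta}$.

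The underlying misuse is spending \eqref{sup_bound} uniformly over the whole family, which discards \eqref{L2_bound} --- a far stronger input for typical $f$, since $\sum_f^h|L\cdot M|^2\le(\log k)^A$ whereas substituting $k^{-\delta}w_f^{-1}$ yields $k^{-\delta}\sum_f^h w_f^{-1}\asymp k^{1-\delta}$. A correct argument (and the paper relies entirely on \cite{kowalski_michel} for one) must treat the two hypotheses asymmetrically: \eqref{L2_bound} controls the bulk of $H_k$, where $\zeta(2)w_f(x)$ is already a good approximation to $L(1,\sym^2 f)$, while \eqref{sup_bound} is reserved for a sparse exceptional set of $f$ for which it is not; and bounding the size of that exceptional set requires a nontrivial Petersson-averaged estimate for a moment of $L(1,\sym^2 f)-\zeta(2)w_f(x)$ over $f\in H_k$, which supplies cancellation in the $f$-sum that a pointwise convexity bound for $L(1-\alpha+it,\sym^2 f)$ cannot see. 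That cancellation is the source of the missing power of $k$.
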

The result of the previous section guarantees condition (\ref{L2_bound}) so long
as the mollifier has length $M = k^\theta$, $\theta < \frac{1}{5}$.  
Trivially $|M(\frac{1}{2} + \sigma + it)| < k^{\theta/2 + \epsilon}$ 
and the best known subconvex bound (see \cite{jutila_motohashi}) gives
$L(\frac{1}{2} + \sigma + it)\ll (k + |t|)^{1/3 - 2\sigma/3 +
\epsilon}$.  Thus condition (\ref{sup_bound}) holds uniformly in $|t|< k$ for
$\theta < \frac{1}{3}$. Therefore, we  complete the proof of Proposition
\ref{natural_mollification} by proving the following uniform bound.

\begin{prp}\label{truncated}
 For sufficiently small $\kappa, \delta, \theta > 0$ there exists $
\gamma(\kappa, \delta, \theta) > 0$ such that, uniformly in
$\frac{1}{\log k} < \sigma \leq 1$ and $|t| < k^\delta$,
\[\frac{1}{\zeta(2)} {\sum_{f \in H_k}}^h \left(\sum_{n \leq x= k^\kappa}
\frac{\rho_f(n)}{n}\right)\left|M L(\frac{1}{2} + \sigma + it;f)\right|^2
\leq 1 + O(k^{-\theta \sigma} + k^{-\kappa/2 + \epsilon}),\]
where $M$ is the mollifier from the previous section, having length $M =
k^\theta$.
\end{prp}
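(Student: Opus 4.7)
The plan is to use the arithmetic identity $\rho_f(n) = \sum_{ab^2=n}\lambda_f(a^2)$ to rewrite
\[
\Sigma \;:=\; \frac{1}{\zeta(2)}{\sum_{f\in H_k}}^h w_f(x)\bigl|M\cdot L(\tfrac{1}{2}+\sigma+it;f)\bigr|^2 \;=\; \frac{1}{\zeta(2)}\sum_{ab^2\leq x}\frac{1}{ab^2}{\sum_f}^h \lambda_f(a^2)\,|M\cdot L|^2
\]
and to treat the $a=1$ and $a\geq 2$ contributions separately. For $a=1$, Proposition \ref{mollifier_bound} combined with $\sum_{b\leq\sqrt{x}}b^{-2}\leq\zeta(2)$ yields an immediate upper bound of $1+O(k^{-\theta\sigma})$ for this piece, provided $\theta$ is chosen small enough that the $k^{3\theta-2\sigma-1/2+\epsilon}$ error of that proposition is absorbed.

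For the $a\geq 2$ piece, I would expand $|M\cdot L|^2$ using the mollifier-squared formula (\ref{mollifier_squared}) and the approximate functional equation (\ref{AFE}), so that the inner sum reduces to ${\sum_f}^h \lambda_f(a^2)\lambda_f(m_1 m_2)|L|^2$ with $m_1m_2$ squarefree. The Hecke relation then gives
\[
\lambda_f(a^2)\lambda_f(m_1 m_2) = \sum_{e\mid (a,m_1 m_2)} \lambda_f\bigl((a/e)^2 m_1 m_2\bigr),
\]
and the resulting twisted second moments are evaluated by the natural extension of Theorem \ref{twisted_moment} to $\ell$ of the form $c^2 r$ with $r$ squarefree; the proof in Section \ref{twisted_section} carries over verbatim, with $\tau_{it}$ replaced by the appropriate multiplicative function and an unchanged Bessel-integral error of size $\ell^{1+\sigma}k^{-1/2-2\sigma+\epsilon}$.

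The resulting expression decomposes into pieces analogous to $\cS_1$, $\cS_2$, $2i^k\Re\cS_3$ of Section \ref{mollifier_section}, each now carrying an additional summation over $a\geq 2$ and $e\mid(a,m_1m_2)$. Mellin inversion in the $F$-cutoff parameters, contour shifts to $\mathcal{C}$ and $\mathcal{C}'$, and residue calculus at $\alpha=0$ and the secondary pole $\alpha=-2\sigma-\beta$ proceed in close analogy to the proof of Proposition \ref{mollifier_bound}. The extra $a$- and $e$-summations assemble at the main pole into the Dirichlet-series factor $L(1,\sym^2 f)/\zeta(2)-1$ (with the $a=1$ term already extracted), and the resulting subtraction produces precisely the cancellation that reduces the leading residue to $O(k^{-\theta\sigma})$. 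The Bessel-integral error summed over $\ell\leq xM^2 = k^{\kappa+2\theta}$ then contributes the $O(k^{-\kappa/2+\epsilon})$ error and constrains the permissible ranges of $\kappa$ and $\theta$.

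The hard part will be rigorously verifying the cancellation between the $a=1$ and $a\geq 2$ leading main terms: one must show that the Dirichlet-series factor assembled from the $a$- and $e$-summations really does cancel the main-term residue up to secondary order, in parallel with the cancellation produced by the $\mu(m_1)\mu(m_2)$ factors in the $\cS_1$ calculation, and that the cancellation survives uniformly in $\sigma\in[\tfrac{1}{\log k},1]$ and $|t|<k^\delta$.
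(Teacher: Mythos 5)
Your starting identity $\rho_f(n)=\sum_{ab^2=n}\lambda_f(a^2)$ is the same one the paper uses, and you correctly see that the Hecke relation on $\lambda_f(a^2)\lambda_f(m_1m_2)$ reduces things to twisted second moments. But your plan to split into $a=1$ and $a\ge 2$ and argue that the $a\ge 2$ piece is $O(k^{-\theta\sigma})$ by ``cancellation'' is not the route the paper takes, and as described it does not hang together. The specific mechanism you propose --- that the $a$- and $e$-summations ``assemble at the main pole into the Dirichlet-series factor $L(1,\sym^2 f)/\zeta(2)-1$'' --- cannot be right, because by the time one is extracting residues the harmonic sum over $f$ has already been carried out (via Theorem \ref{twisted_moment}), so there is no $f$-dependent factor left to assemble. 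What actually happens in the paper is quite different: keeping the full $d$-sum intact, one uses the elementary identity
\[
\sum_d \frac{\tau_\gamma(m_1m_2 d^2)}{d^s} = \frac{\zeta(s)}{\zeta(2s)}\zeta(s+2\gamma)\zeta(s-2\gamma)\prod_{p\mid m_1m_2/(m_1,m_2)^2}\frac{p^\gamma+p^{-\gamma}}{1+p^{-s}}\prod_{p\mid(m_1,m_2)}\frac{1+p^{2\gamma}+p^{-2\gamma}-p^{-s}}{1+p^{-s}},
\]
which reorganizes the $a$-sum into new $\zeta$-prefactors and a \emph{locally modified} Euler product. One then redoes the Mellin/contour argument of Proposition \ref{mollifier_bound} with the new local factor $\tilde H$ in place of $H$; the new $G(\alpha,\beta;\sigma,t)$ has the same pole structure, and its value at $(0,0)$ together with the new prefactor $\zeta(1+2\sigma)\frac{\zeta(2+2\sigma)}{\zeta(4+4\sigma)}|\zeta(2+2\sigma+2it)|^2$ still equals $1$. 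There is no separate ``$a\ge 2$ piece is small'' step.

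Two further gaps. First, the source of the $k^{-\kappa/2+\epsilon}$ term is misidentified: it does not come from summing the Bessel-integral error over $\ell\le xM^2$ (that error instead yields $k^{-1/2+3\theta+\epsilon}x^2$, which constrains $\kappa$ and $\theta$). It comes from \emph{completing} the truncated $d$-sum $\sum_{d<x/(\ell^2 h_1 h_2)}$ to an infinite sum, which costs $O(x^{-1/2+\epsilon})=O(k^{-\kappa/2+\epsilon})$. Second, and more importantly, your outline skips the fact that $\mathcal{S}_2\le 0$ must be re-established for the natural average. Because the $a$-sum alters the local Euler factors, the non-negativity argument is different from the harmonic case and is nontrivial: the paper rewrites $\mathcal{S}_2$ as a sum over $k=ghr$ of a squared Dirichlet polynomial times a multiplicative weight $\prod_{p\mid k}b(p)$, and then has to verify $b(p)\ge 0$ (the minimum, at $p=2$, $\sigma=0$, $|p^{it}+p^{-it}|=2$, is about $0.135$). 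Without this step the upper bound $\le 1+O(\cdot)$ does not follow, since one can no longer simply quote the sign of $\zeta(1-2\sigma)$. Any correct proof must address both of these points, and the $a=1$/$a\ge 2$ decomposition, even if it could be made to work, would force you to re-derive essentially the same Euler-product bookkeeping piecemeal --- it buys nothing.
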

\subsection{Proof of Proposition \ref{truncated}}
Combining expression (\ref{mollifier_squared}) for $|M(\frac{1}{2} + \sigma +
it;f)|^2$ with $\sum_{n \leq x} \frac{\rho_f(n)}{n} = \sum_{\ell^2 d < x}
\frac{\lambda_f(d^2)}{\ell^2 d}$ and the Hecke relations, we obtain
\begin{align*} &\sum_{n \leq x} \frac{\rho_f(n)}{n} \left|M\left(\frac{1}{2} +
\sigma +
it; f\right)\right|^2 = \sum_{\ell^2 d < x} \frac{1}{\ell^2 d}{\sum_g}^\flat
\frac{1}{g^{1 + 2\sigma}}\\& \times{\sum_{\substack{(m_1, n_1) = 1\\
(m_2, n_2) =
1\\ (m_i n_i, g) = 1}}}^\flat
\frac{\mu(m_1)\mu(m_2)F(m_1n_1g)F(m_2n_2g)}{m_1^{1/2 + \sigma +
it}m_2^{1/2 + \sigma - it}n_1^{1 + 2\sigma + 2it}n_2^{1 + 2\sigma -
2it}}  \sum_{h|(d^2, m_1m_2)}
\lambda_f\left(\frac{m_1m_2d^2}{h^2}\right). \end{align*}  Write $h = h_1
h_2^2$ where $h_1$ and $h_2$ are squarefree.  Clearly $h_2 |(m_1,m_2)$ and 
$h_2|d$.  Also, $h_1 | (d, m_1m_2)$.  Shifting the orders of summation, and
then introducing our expression for the twisted second moment, we obtain
\begin{align*}& {\sum_{f \in H_k}}^h \left(\sum_{n \leq x= k^\kappa}
\frac{\rho_f(n)}{n}\right)\left|M L\left(\frac{1}{2} + \sigma +
it;f\right)\right|^2=\sum_\ell \frac{1}{\ell^2}{\sum_{(g, h_2)=1}}^\flat
\frac{1}{g^{1 +
2\sigma }h_2^{2 + 2\sigma}}
\\&\quad \times
{\sum_{\substack{(m_1,n_1) = 1\\ (m_2,n_2) = 1\\ (m_1m_2n_1n_2, gh_2)=1}}}^\flat
\frac{\mu(m_1)\mu(m_2)F(m_1n_1gh_2)F(m_2n_2gh_2)}{m_1^{1/2 + \sigma +
it}m_2^{1/2 + \sigma -it}n_1^{1 + 2\sigma + 2it}n_2^{1 + 2\sigma - 2it}}
{\sum_{h_1|m_1m_2}}^\flat \frac{1}{h_1}
\\ &\quad  \times \sum_{d <
\frac{x}{\ell^2 h_1h_2}} \Biggl\{ 
\frac{\zeta(1 + 2\sigma)\tau_{it}(m_1m_2d^2)}{d (m_1m_2d^2)^{1/2 +
\sigma}} +
\left(\frac{k}{4\pi}\right)^{-4\sigma}
\frac{\zeta(1-2\sigma)\tau_{it}(m_1m_2 d^2)}{d(m_1m_2d^2)^{1/2 -
\sigma}} + \\ &  +2i^k\Re\left(
\left(\frac{k}{4\pi}\right)^{-2\sigma +
2it}\frac{\zeta(1 + 2it)\tau_\sigma(m_1m_2d^2)}{d(m_1m_2d^2)^{1/2 +it}} \right)
+ O\left(\frac{d^{3/2}
(m_1m_2)^{3/4}}{k^{1/2 +\sigma - \epsilon}} \right)\Biggr\}\end{align*}
The error term contributes
$\ll k^{-1/2 + 5\theta/2 -2\sigma + \epsilon}x^{2 + 2\sigma} \ll k^{-1/2 +
5\theta/2 + \epsilon} x^2.$

For $\sigma > \frac{1}{4}$, the terms involving $\zeta(1 - 2\sigma)$ and
$\zeta(1 + 2it)$ are negligibly small and so we are left to consider only the
$\zeta(1 + 2\sigma)$ term; otherwise, for $\frac{1}{\log k} < \sigma <
\frac{1}{4}$ we consider all three terms.  In either case, we may remove the
restriction on the sum over $d$ with error $\ll x^{-1/2 + \epsilon}$.
Thus
\[\frac{1}{\zeta(2)}{\sum_{f \in H_k}}^h \left(\sum_{n \leq x= k^\kappa}
\frac{\rho_f(n)}{n}\right)\left|M\cdot L\left(\frac{1}{2} + \sigma +
it;f\right)\right|^2 = \mathcal{S}_1 + \mathcal{S}_2 + 2\Re \mathcal{S}_3\] with
the
stipulation that $\mathcal{S}_2 = \mathcal{S}_3 = 0$ if $\sigma > \frac{1}{4}$. 

We use the following lemma.
\begin{lem}
 Let $m_1$ and $m_2$ be squarefree.  For $\Re(s \pm \gamma) > 1$ we have
\begin{align*}\sum_d \frac{\tau_\gamma(m_1m_2 d^2)}{d^s} &=
\frac{\zeta(s)}{\zeta(2s)}
\zeta(s + 2\gamma) \zeta(s-2\gamma)\\&\quad\times
\prod_{p|\frac{m_1m_2}{(m_1,m_2)^2}}
\frac{p^\gamma + p^{-\gamma}}{1 + p^{-s}} \prod_{p|(m_1,m_2)}\frac{1 +
p^{2\gamma} + p^{-2\gamma} - p^{-s}}{1 + p^{-s}}.\end{align*}
\end{lem}

We first prove that for $\sigma < \frac{1}{4}$,  $\mathcal{S}_2< 0$, so that
it may be completely discarded. Since we assume $\sigma < \frac{1}{4}$,
\begin{align*}\mathcal{S}_2 &= \zeta(1-2\sigma) \frac{\zeta(2 -
2\sigma)}{\zeta(4-4\sigma)} \left|\zeta(2-2\sigma + 2it)\right|^2 {\sum_{k =
gh}}^\flat \frac{1}{g^{1 -
2\sigma}h^{2-2\sigma}}\\&\times{\sum_{\substack{(m_1,n_1)=1\\(m_2,
n_2)=1\\(m_1n_1m_2n_2,k )=1} } }^\flat
\frac{\mu(m_1)\mu(m_2)F(m_1n_1k)F(m_2n_2k)}{m_1^{1 + it}m_2^{1-it}n_1^{1 +
2\sigma + 2it}n_2^{1 + 2\sigma - 2it}} 
 \\& \times \prod_{p|m_1}\left(\frac{p+1}{p} \cdot
\frac{p^{it} + p^{-it}}{1 + p^{-2+2\sigma}}\right)\prod_{p|m_2}\left(
\frac{p+1}{p}\cdot\frac{p^{it} + p^{-it}}{1 +
p^{-2+2\sigma}}\right)
\\& \times \prod_{p|(m_1,m_2)}\left(\frac{p}{p+ 1}\cdot \frac{(1 + p^{2it} +
p^{-2it} -
p^{-2+2\sigma})(1 + p^{-2+2\sigma})}{(p^{it} + p^{-it})^2}\right)\end{align*}
This may be rearranged as
\begin{align*}&\zeta(1-2\sigma) \frac{\zeta(2 -
2\sigma)}{\zeta(4-4\sigma)} \left|\zeta(2-2\sigma + 2it)\right|^2{\sum_{k =
ghr}}^\flat \frac{a(r)}{g^{1 -
2\sigma}h^{2-2\sigma}r^2}\\&
\qquad \times\left|{\sum_{\substack{(m_1,n_1)=1\\(m_1n_1,k
)=1} } }^\flat
\frac{\mu(m_1)F(m_1n_1k)}{m_1^{1 + it}n_1^{1 +
2\sigma + 2it}} \prod_{p|m_1}\left(
\frac{p+1}{p} \cdot \frac{p^{it} + p^{-it}}{1 + p^{-2+2\sigma}}\right)
\right|^2,\end{align*}
where $a(r)$ is the multiplicative function, supported on squarefree integers,
and given on primes by
\begin{align*}a(p) &= \frac{p+1}{p}\cdot \frac{1 + p^{2it} + p^{-2it} - p^{-2 +
2\sigma}}{1 + p^{-2 + 2\sigma}} - \left(\frac{p+1}{p}\frac{p^{it} + p^{-it}}{1 +
p^{-2 + 2\sigma}}\right)^2\\& = -\frac{p+1}{p} - (p^{it} + p^{-it})^2
\left[\left(\frac{p+1}{p + p^{-1 + 2\sigma}} \right)^2 -
\frac{p+ 1}{p + p^{-1 + 2\sigma}}\right].\end{align*}  Now observe
\[\sum_{ghr= k} \frac{a(r)}{g^{1-2\sigma}h^{1-2\sigma}r^2} = \prod_{p|k}b(p);
\qquad \qquad b(p) =  \frac{1}{p^{1-2\sigma}} + \frac{1}{p^{2-2\sigma}} +
\frac{a(p)}{p^2}.\]  We have $b(p) \geq 0$; indeed, it suffices to check
this under the conditions  $|p^{it} + p^{-it}|=2$, $\sigma = 0$
and $p=2$, and in this case we find a value of  
$0.135$. In particular, since
$\zeta(1-2\sigma) < 0$ this proves that $\mathcal{S}_2 \leq 0$.

Next we turn to $\mathcal{S}_1$.  We have
\begin{align*}\mathcal{S}_1 &= \zeta(1+2\sigma) \frac{\zeta(2 +
2\sigma)}{\zeta(4+4\sigma)} \left|\zeta(2+2\sigma + 2it)\right|^2 {\sum_{k =
gh}}^\flat \frac{1}{g^{1 +
2\sigma}h^{2+2\sigma}}
\\&\qquad\times {\sum_{\substack{(m_1,n_1)=1\\(m_2,
n_2)=1\\(m_1n_1m_2n_2,k )=1} } }^\flat
\frac{\mu(m_1)\mu(m_2)F(m_1n_1k)F(m_2n_2k)}{m_1^{1 +2\sigma+
it}m_2^{1+2\sigma-it}n_1^{1 +
2\sigma + 2it}n_2^{1 + 2\sigma - 2it}} 
 \\&\qquad \times \prod_{p|m_1}\left(
\frac{p+1}{p}\cdot \frac{p^{it} + p^{-it}}{1 +
p^{-2-2\sigma}}\right)\prod_{p|m_2}\left(
\frac{p+1}{p}\cdot\frac{p^{it} + p^{-it}}{1 +
p^{-2-2\sigma}}\right)
\\&\qquad\times \prod_{p|(m_1,m_2)}\left(\frac{p}{p+ 1}\cdot \frac{(1 + p^{2it}
+ p^{-2it} -
p^{-2-2\sigma})(1 + p^{-2-2\sigma})}{(p^{it} + p^{-it})^2}\right)
\\
&=\zeta(1+2\sigma) \frac{\zeta(2 +
2\sigma)}{\zeta(4+4\sigma)} \left|\zeta(2+2\sigma + 2it)\right|^2
\\&\qquad\times \left(\frac{1}{2\pi i}\right)^2 \int_{(2)}\int_{(2)}
\hat{F}(\alpha)\hat{F}(\beta) G(\alpha, \beta; \sigma, t)d\alpha
d\beta\end{align*} where $G$ is given by
\begin{align*}&G(\alpha, \beta; \sigma, t)\\
& = \frac{\zeta(4 + 4\sigma)}{\zeta(2
+
2\sigma)}\prod_p \Biggl[1 + \frac{1}{p^{2 + 2\sigma}} +  \frac{1}{p^{1 + 2\sigma
+ \alpha + \beta}} + \frac{1}{p^{2 + 2\sigma + \alpha + \beta}} +
\frac{1}{p^{4 + 4\sigma + \alpha + \beta}}  
\\&\qquad- \frac{1}{p^{5 + 6\sigma + \alpha +
\beta}}- \frac{1}{p^{1 + 2\sigma + \alpha}} - \frac{1}{p^{2 + 2\sigma +
\alpha}} - \frac{1}{p^{2 + 2 \sigma + 2it + \alpha}}
\\&\qquad + \frac{1}{p^{3 + 4\sigma +
2it + \alpha}} - \frac{1}{p^{1 + 2\sigma + \beta}}- \frac{1}{p^{2 + \sigma +
\beta}}- \frac{1}{p^{2 + 2\sigma -2it + \beta}} + \frac{1}{p^{3 + 4\sigma -2it +
\beta}}\Biggr]
\end{align*}
Here \[G(\alpha, \beta; \sigma, t) = \frac{\zeta(1 + 2\sigma + \alpha + \beta
)}{\zeta(1+2\sigma +\alpha)\zeta(1 + 2\sigma + \beta)} \tilde{H}(\alpha, \beta;
\sigma, t)\] where $\tilde{H}$ is given by an absolutely convergent Euler
product for \[\min(\Re(\alpha), \Re(\beta), \Re(\alpha + \beta)) > -2\sigma -
c\] for some $c > 0$.  This is to say that the contour giving
$\mathcal{S}_1$ under the natural average is the same as for the harmonic
average up to a change in the absolutely convergent Euler product.  Thus the
analysis of $\mathcal{S}_1$ from the previous section goes through without
change to give
\begin{align*}\mathcal{S}_1 &= \zeta(1+2\sigma) \frac{\zeta(2 +
2\sigma)}{\zeta(4+4\sigma)} \left|\zeta(2+2\sigma + 2it)\right|^2 G(0,0;\sigma,
t) + O(k^{-\theta\sigma}) \\&= 1 + O(k^{-\theta\sigma}).\end{align*}

The reader may check that the contour integral giving $\mathcal{S}_3$ is the
same for the natural average as for the harmonic average, up to an absolutely
convergent Euler product.  Thus the analysis of the previous section yields the
bound $\Re(\mathcal{S}_3) = O(k^{-\theta \sigma})$, which completes the proof
of Proposition \ref{truncated}.

\subsection*{Acknowledgements}
It is a pleasure to thank my PhD adviser Kannan Soundararajan for his guidance, and also to thank Xiannan Li for helpful conversations and encouragement.


\begin{thebibliography}{HD}


\normalsize
\baselineskip=17pt


\bibitem[BH]{bombieri_hejhal} E. Bombieri and D.A. Hejhal,
\emph{On the distribution of zeros of linear combinations of {E}uler products},
Duke Math. J. 80 (1995), no.~3, 821--862.

\bibitem[CS]{conrey_sound} J.B. Conrey and K. Soundararajan,
\emph{Real zeros of quadratic {D}irichlet {$L$}-functions},
Invent. Math. 150 (2002), no.~1, 1--44.

\bibitem[D]{deligne} P. Deligne,
\emph{La conjecture de {W}eil. {I}},
Inst. Hautes \'Etudes Sci. Publ. Math. (1974), no.~43, 273--307.

\bibitem[EWOT]{bateman_higher_vol_2} A. Erd{\'e}lyi, M. Wilhelm, F. Oberhettinger, and F. Tricomi
\emph{Higher transcendental functions. {V}ol. {II}},
Robert E. Krieger Publishing Co. Inc., Melbourne, Fla., 1981.

\bibitem[Fa]{faiziev} R.F. Fa{\u\i}ziev, 
\emph{Estimates in the mean in the additive divisor problem},
Izv. Akad. Nauk Tadzhik. SSR Otdel. Fiz.-Mat. Khim. i Geol. Nauk, (1985) no.~1(95), 7--17.

\bibitem[Fo]{fomenko} O.M. Fomenko,
\emph{Automorphic {$L$}-functions in the weight aspect},
Zap. Nauchn. Sem. S.-Peterburg. Otdel. Mat. Inst. Steklov. 314 (2004), no.~20, 221--246, 289--290.

\bibitem[H]{hough_normal} B. Hough,
\emph{The distribution of the logarithm in an orthogonal and symplectic family of {$L$}-functions},
Forum Math. (2012), to appear.

\bibitem[I]{ingham} A.E. Ingham,
\emph{On the estimation of {$N(\sigma,T)$}},
Quart. J. Math., Oxford Ser. 11 (1940), 291--292.

\bibitem[IK]{iwaniec_kowalski} H. Iwaniec and E. Kowalski,
\emph{Analytic number theory},
Amer. Math. Soc. Colloq. Publ. 53, Amer. Math. Soc., Providence, RI, 2004.

\bibitem[ILS]{iwaniec_luo_sarnak} H. Iwaniec, W.Z. Luo, and P. Sarnak,
\emph{Low lying zeros of families of {$L$}-functions},
Inst. Hautes \'Etudes Sci. Publ. Math. (2000), no.~91, 55--131.

\bibitem[JM]{jutila_motohashi} M. Jutila and Y. Motohashi, 
\emph{Uniform bound for {H}ecke {$L$}-functions}, 
Acta Math. 195 (2005), 61--115.

\bibitem[Kh]{khan}
\emph{The first moment of the symmetric-square {$L$}-function},
J. Number Theory 124 (2007), no.~2, 259--266.

\bibitem[KM]{kowalski_michel} E. Kowalski and P. Michel,
\emph{The analytic rank of {$J\sb 0(q)$} and zeros of automorphic {$L$}-functions},
Duke Math. J. 100 (1999) no.~3, 503--542.

\bibitem[Ku]{kuznetsov} N.V. Kuznetsov,
\emph{Convolution of {F}ourier coefficients of {E}isenstein-{M}aass series},
Zap. Nauchn. Sem. Leningrad. Otdel. Mat. Inst. Steklov. (LOMI) 129 (1983), 43--84.

\bibitem[L]{luo} W.Z. Luo,
\emph{Zeros of {H}ecke {$L$}-functions associated with cusp forms},
Acta Arith. 71 (1995), no.~2, 139--158.

\bibitem[R]{ricotta} G. Ricotta,
\emph{Real zeros and size of {R}ankin-{S}elberg {$L$}-functions in the level aspect},
Duke Math. J. 131 (2006), no.~2, 291--350.

\bibitem[RS]{rudnick_sound_examples} Z. Rudnick and K. Soundararajan,
\emph{Lower bounds for moments of {$L$}-functions: symplectic and orthogonal examples},
in: Multiple {D}irichlet series, automorphic forms, and analytic number theory,
Proc. Sympos. Pure Math. 75,
Amer. Math. Soc., Providence, RI, 293--303.

\bibitem[S1]{selberg_zeta} A. Selberg,
\emph{Contributions to the theory of the {R}iemann zeta-function},
Arch. Math. Naturvid. 48 (1946), no.~5, 89--155.

\bibitem[S2]{selberg_dirichlet} A. Selberg,
\emph{Contributions to the theory of {D}irichlet's {$L$}-functions},
Skr. Norske Vid. Akad. Oslo. I. (1946), no.~3, 62.

\bibitem[T]{titchmarsh} E.C. Titchmarsh,
\emph{The theory of the {R}iemann zeta-function},
2nd ed., Oxford Univ. Press, New York, 1986.

\end{thebibliography}
\end{document}